\newtheorem{theorem}{\bf Theorem}[section]
\newtheorem{lemma}[theorem]{\bf Lemma}
\begin{document}
	\title{On balancing and Lucas-balancing numbers expressible as product
		of two $k$-Fibonacci numbers}
	\author{Bibhu Prasad Tripathy and Bijan Kumar Patel}
	\date{}
	\maketitle
	\begin{abstract}
		A positive integer $n$ is called a balancing number if there exists a positive integer $r$ such that $1 + 2 + \cdots + (n-1) = (n+1) + (n+2) + \cdots + (n+r)$. The corresponding value $r$ is known as the balancer of $n$. If $n$ is a balancing number, then $8n^{2}+1$ is a perfect square, and its positive square root is called a Lucas-balancing number. For any integer $k \geq 2$, let $\{F_{n}^{(k)} \}_{n \geq -(k-2)}$ denote $k$-generalized Fibonacci sequence which starts with $0, \dots ,1$($k$ terms) where each next term is the sum of the $k$ preceding terms. In this paper, we investigate all balancing and Lucas-balancing numbers that can be expressed as the product of two $k$-generalized Fibonacci numbers. 
	\end{abstract} 
	
	\noindent \textbf{\small{\bf Keywords}}: $k$-Fibonacci numbers, balancing numbers, Lucas-balancing numbers, linear forms in logarithms, reduction method. \\
	{\bf 2020 Mathematics Subject Classification:} 11B39; 11J86.
	
\section{Introduction}
A positive integer $n$ is said to be a balancing number if
	\[
	\sum_{i=1}^{n-1} i = \sum_{j= n +1}^{n+r} j
	\]
holds for some positive integer $r$. Then $r$ is referred to as the balancer corresponding to the balancing number $n$ (see \cite{Behera}). The sequence of balancing numbers, denoted by $\{B_{n}\}_{n \geq 0}$, is defined by the recurrence relation  
	\[
B_{0} = 0, ~B_{1} = 1,~	B_{n+1} = 6B_{n} - B_{n-1}, \quad \text{for all} \quad n \geq 1.
	\]
Associated with each balancing number $n$, the positive value of $\sqrt{8 n^{2}+1}$ is known as the Lucas-balancing number, denoted by $C_{n}$ (see \cite{Panda, Ray}). The sequence $\{C_{n}\}_{n \geq 0}$, called the Lucas-balancing sequence, satisfies the same recurrence relation as $\{ B_{n} \}$, but with different initial values 
\[
C_{0} = 1, ~C_{1} = 3,~	C_{n+1} = 6C_{n} - C_{n-1}, \quad \text{for all} \quad n \geq 1.
	\]
The balancing and Lucas-balancing sequences correspond to entries A001109 and A001541, respectively, in the On-Line Encyclopedia of Integer Sequences (OEIS). Both sequences satisfy the characteristic equation $x^{2} - 6x + 1 = 0$, whose roots are $\gamma = 3+ 2 \sqrt{2}$ and $\delta =  3 - 2 \sqrt{2}$. The Binet formulas for $B_{n}$ and $C_{n}$ are given by
	\begin{equation}\label{eq 1.1}
		B_{n} = \frac{\gamma^{n} - \delta^{n}}{4 \sqrt{2}} \quad \text{and} \quad C_{n} = \frac{\gamma^{n} + \delta^{n}}{2} \quad \text{for all} \quad n \geq 1.   \end{equation}
By induction, it can be shown that the inequalities
	\begin{equation}\label{eq 1.2}
		\gamma^{n-1} \leq B_{n} \leq \gamma^{n} \quad \quad \text{and} \quad \quad \gamma^{n} \leq 2 C_{n} \leq \gamma^{n+1} \quad \text{hold for all} \quad n \geq 1.
	\end{equation}

The classical Fibonacci sequence $\{F_n\}_{n\geq 0}$ is the binary recurrence sequence defined by 
	\[
	F_{n+2} = F_{n+1} + F_n \quad \text{for all} \quad n\geq 0,
	\]
with initials $F_0 = 0$ and $F_1 = 1$. Let $ k \geq 2 $ be an integer. We consider a generalization of the Fibonacci sequence $\{F_{n}^{(k)} \}_{n \geq -(k-2)}$ defined as 
	\begin{equation}\label{eq 1.3}
		F_{n}^{(k)} = F_{n-1}^{(k)} + F_{n-2}^{(k)} + \dots + F_{n-k}^{(k)} = \sum_{i=1}^{k} F_{n-i}^{(k)} ~~\text{for all}~n \geq 2, 
	\end{equation}
with initials $F_{-(k-2)}^{(k)} = F_{-(k-3)}^{(k)} = \dots = F_{0}^{(k)} = 0$ and $F_{1}^{(k)} = 1$. This sequence $F_{n}^{(k)}$ is called the $k$-generalized Fibonacci sequence or the $k$-Fibonacci sequence. This generalization is a family of sequences, with each new choice of $k$ producing a unique sequence. For example, if $k = 2$, we get $F_{n}^{(2)} = F_{n}$, the classical Fibonacci sequence. For $k = 3$, we have $F_{n}^{(3)} = T_n$, the $n$th Tribonacci number. They are followed by the Tetranacci numbers for $k = 4$, and so on.
	
Diophantine equations have a long history involving the intersection of $k$-Fibonacci and other linear recurrence sequences. In recent years, the study of finding intersections of two linear recurrence sequences has been extended to the broader problem of determining all terms in a given sequence that can be expressed as products of terms from another sequence. For instance, Ddamulira et al. \cite{Ddamulira} investigated Fibonacci numbers that are products of two Pell numbers, as well as Pell numbers that are products of two Fibonacci numbers. In a related study, Ad\'edji et al. \cite{Adedji} examined Padovan and Perrin numbers that are products of two generalized Lucas numbers. Liptai \cite{Liptai} has shown that $1$ is the only balancing number appearing in the Fibonacci sequence. Rihane \cite{Rihane3} generalized this result and proved that $1, 6930$ are the only balancing numbers and $1, 3$ are the only Lucas-balancing numbers, that occur as terms of a  $k$-generalized Fibonacci sequence. Consequently, Erduvan and Keskin \cite{Erduvan} determined all Fibonacci numbers that are products of two balancing numbers and, conversely, all balancing numbers that are products of two Fibonacci numbers. Recently, Rihane \cite{Rihane4} extended the work of Erduvan and Keskin \cite{Erduvan} by determining all $k$-Fibonacci numbers that are the product of two balancing or Lucas-balancing numbers.  

Motivated by the above works, the present study extends the work of Erduvan and Keskin \cite{Erduvan} and aims to determine the balancing and Lucas-balancing numbers that are products of two $k$-Fibonacci numbers. More precisely, we establish the following results.

\begin{theorem}\label{thm1}
		All the solutions of the Diophantine equation 
		\begin{equation}\label{eq 1.4}
			B_{l} = F_{n}^{(k)} F_{m}^{(k)},
		\end{equation}
		in positive integers $n, m, l$ and $k$ with $k \geq 3$ and $1 \leq m \leq n$ are
		\[
		B_{1} = F_{1}^{(k)} F_{1}^{(k)} \quad \text{for all} \quad k \geq 3, \quad B_{1} = F_{1}^{(k)} F_{2}^{(k)} \quad \text{for all} \quad k \geq 3, \quad B_{1} = F_{2}^{(k)} F_{2}^{(k)} \quad \text{for all} \quad k \geq 3,
		\]
		\[
		 B_{6} = F_{1}^{(5)} F_{15}^{(5)} \quad \text{and} \quad B_{6} = F_{2}^{(5)} F_{15}^{(5)}.
		\]
\end{theorem}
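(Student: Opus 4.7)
I would follow the standard two-stage template for such mixed equations: a cascade of linear forms in logarithms producing an absolute bound on $k$, followed by a Baker--Davenport reduction that collapses the remaining range to a computer-checkable one. Begin by separating the case $1\le m\le n\le k+1$, in which $F_n^{(k)}\in\{1\}\cup\{2^{n-2}:n\ge 2\}$, so \eqref{eq 1.4} forces $B_l$ to be a power of $2$. A short argument using the identity $B_{2j}=2B_jC_j$ with $C_j$ odd (together with the observation, from the recurrence, that $B_l$ is odd iff $l$ is odd) shows that $B_l=2^s$ only for $l=1$, producing exactly the three generic families listed in the theorem. Henceforth I may assume $n\ge k+2$.

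In this regime one has the Dresden--Du approximation $F_n^{(k)}=g_k(\alpha)\alpha^{n-1}+e_n^{(k)}$, where $\alpha\in(1,2)$ is the dominant root of $x^k-x^{k-1}-\cdots-1$, $g_k(z)=(z-1)/(2+(k+1)(z-2))$, and $|e_n^{(k)}|<1/2$. Inserting this together with the Binet formula \eqref{eq 1.1} and isolating $\gamma^l$ yields
\[
\Bigl|\,\frac{\gamma^l}{4\sqrt 2\,g_k(\alpha)^2\,\alpha^{n+m-2}}-1\,\Bigr|
\;<\;\frac{C}{\min(\gamma^l,\alpha^{n-m},\alpha^n)},
\]
so Matveev's theorem on linear forms in three algebraic logarithms (with $h(\alpha)=(\log\alpha)/k$ and the standard estimate $h(g_k(\alpha))=O(k\log k)$) gives a lower bound of shape $\exp(-C_1k(1+\log k)^2\log l)$. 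Iterating with a second linear form that isolates $\alpha^m$ against $\gamma^l/F_n^{(k)}$ and a third that targets the ratio $\gamma^l/\alpha^{n+m}$, I would successively decouple $l$, $n-m$, and $n+m$, obtaining an absolute polynomial bound $l<C_2\,k^{C_3}(\log k)^{C_4}$. Combined with the coarse size relation $(l-1)\log\gamma\le(n+m-2)\log\alpha+O(1)$ derived from \eqref{eq 1.2}, this forces $k\le K_0$ for some explicit computable constant $K_0$.

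The main obstacle is the \emph{reduction step} for the remaining range $3\le k\le K_0$, since the bound on $l$ coming from Matveev is still astronomical. For each such $k$ and each relevant residue class of $(n,m,l)$, I would apply the Dujella--Peth\H{o} lemma (in its LLL-enhanced form) to each of the three linear forms in the cascade, iterating until the bound on $l$ is of the order of a few hundred; a direct search within the resulting finite box should then recover exactly the sporadic pair $B_6=F_1^{(5)}F_{15}^{(5)}=F_2^{(5)}F_{15}^{(5)}$ at $k=5$ and no further solutions. The two technical points I expect to be most delicate are (i) obtaining a height bound for $g_k(\alpha)$ that is uniform in $k$, so that the Matveev constant does not overwhelm the upper bound on $|\Lambda_1|$, and (ii) arranging the reduction for $k=5$ carefully enough that the sporadic $l=6$ solutions survive the lattice cuts rather than being excluded by an over-aggressive rounding.
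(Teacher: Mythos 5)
Your treatment of the range $n\le k+1$ (reducing to $B_l=2^s$ and using $B_{2j}=2B_jC_j$ together with the parity of $B_l$) is fine, and in fact tighter than the paper, which simply asserts that $B_l=2^{m+n-4}$ has no solutions there. Your linear forms in the regime $n\ge k+2$ also match the paper's $\Lambda_1$ and $\Lambda_2$ in spirit, and they do yield $n<\mathrm{poly}(k)$ (the paper gets $n<5.1\times10^{31}k^{8}\log^{5}k$).

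There is, however, a genuine gap at the step ``this forces $k\le K_0$.'' All of your linear forms live in the field $\mathbb{Q}(\varphi,\sqrt2)$ of degree $2k$, so every Matveev lower bound you extract carries explicit factors of $k$ (through $d_{\mathbb{L}}=2k$ and through $B_3\gg k\log k$); combined with the size relation these bound $n$ and $l$ polynomially in $k$, but nothing in that cascade bounds $k$ itself. (Also, $h(f_k(\varphi))\ll\log k$, not $k\log k$; the extra factor of $k$ enters only when multiplying by the field degree.) The missing idea --- the heart of the paper's Lemmas 3.3 and 3.4 --- is a dichotomy on $k$: once $n<\mathrm{poly}(k)$, for $k$ large one has $n<2^{k/2}$, and then the estimate $F_n^{(k)}=2^{n-2}(1+\xi)$ with $|\xi|<2^{-k/2}$ replaces $\varphi$ by $2$ and collapses the problem to $\bigl|\gamma^{l}2^{-(m+n-2)}(\sqrt2)^{-1}-1\bigr|<4\cdot2^{-k/2}$, a linear form over the fixed quadratic field $\mathbb{Q}(\sqrt2)$. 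Matveev applied there gives $k\ll\log n\ll\log k$, hence an absolute bound ($k<5.7\times10^{16}$ in the paper), which a further Dujella--Peth\H{o} reduction on $\log\gamma/\log2$ brings below $450$, eliminating the large-$k$ case entirely. Without this step your plan never produces the constant $K_0$, so the reduction over $3\le k\le K_0$ cannot start. That reduction itself, as you describe it, is essentially the paper's: two successive Dujella--Peth\H{o} reductions (first in $m$, then in $n$) followed by a finite search that recovers the $k=5$, $l=6$ solutions.
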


\begin{theorem}\label{thm2}
All the solutions of the Diophantine equation
\begin{equation}\label{eq 1.5}
	C_{l} = F_{n}^{(k)} F_{m}^{(k)},
		\end{equation}
		in positive integers $n, m, l$ and $k$ with $k \geq 2$ and $1 \leq m \leq n$ are $C_{1} = F_{1}^{(2)} F_{4}^{(2)} \ \text{and} \ C_{1} = F_{2}^{(2)} F_{4}^{(2)}$.	
\end{theorem}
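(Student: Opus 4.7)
The plan is to adapt the standard Baker-type framework (linear forms in logarithms plus Dujella-Pethö reduction) that underpins Theorem~\ref{thm1} and the closely related results of Rihane and Erduvan-Keskin. The case $k=2$ is handled first, directly using the theorem of Erduvan-Keskin on Lucas-balancing numbers that are products of two classical Fibonacci numbers, which isolates $C_1 = F_1^{(2)}F_4^{(2)} = F_2^{(2)}F_4^{(2)}$. For $k \geq 3$ I would assume $l \geq 2$ (else $C_l = 1$ forces trivial checks) and may assume $n$ is sufficiently large, with small $n,l$ dispatched by a finite search.

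The analytic setup uses the Binet formula $C_l = (\gamma^l + \delta^l)/2$ with $\gamma\delta = 1$, so $|\delta^l|$ is exponentially small, and the Dresden-Du approximation $F_n^{(k)} = g_k(\alpha)\,\alpha^{n-1} + e_k(n)$, where $\alpha = \alpha_k \in (2(1-2^{-k}),2)$ is the dominant root of $x^k - x^{k-1} - \cdots - 1$, $g_k(\alpha) = (\alpha-1)/((k+1)\alpha - 2k)$, and $|e_k(n)| < 1/2$. Combining \eqref{eq 1.2} with $\alpha^{n-2} \leq F_n^{(k)} \leq \alpha^{n-1}$ I would first derive a two-sided inequality of the shape
\begin{equation*}
(n+m-4)\,\frac{\log \alpha}{\log \gamma} \;\leq\; l \;\leq\; (n+m)\,\frac{\log\alpha}{\log\gamma} + 2,
\end{equation*}
which pins down the rough size of $l$ in terms of $n+m$.

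Next I would construct a first linear form in three logarithms from $\gamma^l/2 \approx g_k(\alpha)^2 \alpha^{n+m-2}$, namely
\begin{equation*}
\Lambda_1 \;=\; l\log\gamma \;-\; (n+m-2)\log\alpha \;-\; \log\!\bigl(2\,g_k(\alpha)^2\bigr),
\end{equation*}
and show $|\Lambda_1| \ll \gamma^{-2l} + \alpha^{-(n-m)}$. Matveev's theorem, applied with the three algebraic numbers $\gamma,\alpha, 2g_k(\alpha)^2$ of bounded height (the height of $g_k(\alpha)$ is controlled by Dresden-Du), yields a first bound of the form $l < C_1 k^3(\log k)^2 \log(n+m)$. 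To eliminate the residual $\log(n+m)$ dependence I would produce a second linear form that isolates one of the factors, e.g.\
\begin{equation*}
\Lambda_2 \;=\; l\log\gamma \;-\; (n-1)\log\alpha \;-\; \log\!\bigl(2\,g_k(\alpha)\,F_m^{(k)}\bigr),
\end{equation*}
and invoke Matveev again, yielding an absolute bound on $l,n,m$ that is polynomial in $k$ with explicit constants.

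The main obstacle, as usual, is turning these astronomically large effective bounds into a feasible finite search. To do so I would use the Dujella-Pethö reduction lemma on the convergents of $\log\alpha/\log\gamma$ for each $k$ in the finite range remaining after a Bravo-Luca style argument: when $k$ is large relative to $n$ one has $F_n^{(k)} = 2^{n-2}$, so the equation degenerates to $C_l = 2^{n+m-4}$, which is easily shown to have no solution with $l \geq 1$ beyond the trivial ones; when $k$ is small one iterates the reduction lemma to cut $l$ down to a few hundred. A final computer verification over the small remaining region confirms that the only solutions are $C_1 = F_1^{(2)} F_4^{(2)}$ and $C_1 = F_2^{(2)} F_4^{(2)}$.
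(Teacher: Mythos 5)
Your overall architecture is the paper's: the Dresden--Du approximation, two applications of Matveev's theorem with essentially the two linear forms you write down, Dujella--Peth\"{o} reduction over the finite range of $k$ that remains, a Bravo--Luca style argument for large $k$, and a closing computation. But one step of your bootstrap, as written, would fail. You claim $|\Lambda_1| \ll \gamma^{-2l} + \alpha^{-(n-m)}$ and that the first Matveev application ``yields a first bound of the form $l < C_1 k^3(\log k)^2\log(n+m)$.'' Neither is right, and the error is not cosmetic. Since $F_j^{(k)} = g_k(\alpha)\alpha^{j-1} + e_k(j)$ with $|e_k(j)| < 1/2$, dividing the approximation error by $g_k(\alpha)^2\alpha^{n+m-2}$ leaves a dominant term of size $\alpha^{-(m-1)}$ (this is the paper's \eqref{eq 4.41}, $|\Lambda_4| < 5/\varphi^{m-1}$), not $\alpha^{-(n-m)}$; when $n=m$ your claimed bound is trivially of order $1$ and gives nothing. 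Consequently what the first application of Matveev actually delivers is an upper bound on $(m-1)\log\alpha$ in terms of $k^4\log^2 k\log n$ --- and that bound on $m$ is exactly what the second step needs, because the parameter $B_3$ for your $\Lambda_2$ requires controlling $h\bigl(F_m^{(k)}\bigr) \approx (m-1)\log\alpha$. Without it the second Matveev application cannot be run and the chain does not close. Replace $\alpha^{-(n-m)}$ by $\alpha^{-(m-1)}$, read off a bound on $m$ rather than on $l$, and the rest of your plan proceeds as in the paper.

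Two smaller corrections. First, Erduvan and Keskin treat balancing numbers (not Lucas-balancing numbers) as products of two Fibonacci numbers, so you cannot simply quote their result for $k=2$; the paper runs $k=2$ through the same general machinery, and indeed the only solutions occur there. Second, for large $k$ the equation degenerates to the exact identity $C_l = 2^{n+m-4}$ only when $n \leq k+1$; for $k+2 \leq n < 2^{k/2}$ one merely has $F_n^{(k)} = 2^{n-2}(1+\xi)$ with $|\xi| < 2^{-k/2}$, so one still needs a two-logarithm Matveev estimate to bound $k$ absolutely and then a continued-fraction reduction to eliminate that range (the paper's Lemmas \ref{lem 4.3} and \ref{lem 4.4}). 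Your sketch gestures at this but compresses it to the point of omitting the step that actually makes the set of admissible $k$ finite.
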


The proofs of our theorems rely primarily on linear forms in logarithms of algebraic numbers, as developed by Matveev \cite{Matveev}, and on the reduction algorithm of Dujella and Peth\"{o} \cite{Dujella}. We begin by presenting a few preliminary results, which are discussed in detail in the subsequent section.	

\section{Preliminary Results}
	This section is devoted to gathering several definitions, notations, properties, and results that will be used in the rest of this study.
	\subsection{Linear forms in logarithms}
	Let $\gamma$ be an algebraic number of degree $d$ with a minimal primitive polynomial 
	\[
	f(Y):= b_0 Y^d+b_1 Y^{d-1}+ \cdots +b_d = b_0 \prod_{j=1}^{d}(Y- \gamma^{(j)}) \in \mathbb{Z}[Y],
	\]
	where the $b_j$'s are relatively prime integers, $b_0 >0$, and the $\gamma^{(j)}$'s are conjugates of $\gamma$. Then the \emph{logarithmic height} of $\gamma$ is given by
	\begin{equation*}
		h(\gamma)=\frac{1}{d}\left(\log b_0+\sum_{j=1}^{d}\log\left(\max\{|\gamma^{(j)}|,1\}\right)\right).
	\end{equation*}
	With the above notation, Matveev (see  \cite{Matveev} or  \cite[Theorem~9.4]{Bugeaud}) proved the following result.
	
	\begin{theorem}\label{thm3}
		Let $\eta_1, \ldots, \eta_s$ be positive real algebraic numbers in a real algebraic number field $\mathbb{L}$ of degree $d_{\mathbb{L}}$. Let $a_1, \ldots, a_s$ be non-zero  integers such that
		\[
		\Lambda :=\eta_1^{a_1}\cdots\eta_s^{a_s}-1 \neq 0.
		\]
		Then
		\[
		- \log  |\Lambda| \leq 1.4\cdot 30^{s+3}\cdot s^{4.5}\cdot d_{\mathbb{L}}^2(1+\log d_{\mathbb{L}})(1+\log D)\cdot B_1 \cdots B_s,
		\]
		where
		\[
		D\geq \max\{|a_1|,\ldots,|a_s|\},
		\]
		and
		\[
		B_j\geq \max\{d_{\mathbb{L}}h(\eta_j),|\log \eta_j|, 0.16\}, ~ \text{for all} ~ j=1,\ldots,s.
		\]
	\end{theorem}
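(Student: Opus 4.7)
The statement is Matveev's lower bound for nonzero linear forms in logarithms of algebraic numbers, and the plan is to follow the Baker--Feldman method in the sharpened form developed by Matveev. The proof would proceed by contradiction: assume that $|\Lambda|$ is \emph{smaller} than the right-hand side of the claimed inequality and derive a contradiction by comparing two estimates for a carefully constructed auxiliary quantity. The proof breaks into three standard pieces: construction of an auxiliary function via Siegel's lemma, an extrapolation step that exploits the hypothesized smallness of $\Lambda$, and a Liouville-type lower bound that closes the contradiction.

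First, I would fix weighting parameters $A_j \geq B_j$ for $j=1,\dots,s$ and a height parameter $E \geq D$. One then seeks a nonzero polynomial $P \in \mathbb{Z}[x_1,\dots,x_s]$, of controlled multidegree $(L_1,\dots,L_s)$ and with coefficients bounded in terms of the $A_j$ and $E$, such that the entire function
\[
\Phi(z) := P\bigl(\eta_1^{z},\dots,\eta_s^{z}\bigr)
\]
together with its first several derivatives vanishes on a large initial block of integer points. The existence of such $P$ is guaranteed by Siegel's lemma once the number of monomials exceeds the number of vanishing conditions, and the choice of the $L_j$ is made so that this comparison is compatible with the quantitative bounds one wishes to obtain.

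Next comes the extrapolation step, which is the heart of the argument. Using the hypothesis that $|\Lambda|$ is very small, the identities $\eta_1^{a_1}\cdots\eta_s^{a_s} \approx 1$ transfer small values of $\Phi$ and its derivatives from the initial vanishing grid to a much longer range of integers. Iterating this propagation produces an interpolation determinant whose entries are derivatives of $\Phi$ at integer points. Estimating this determinant analytically yields a very small upper bound, while a Liouville inequality, applied to the determinant as a nonzero algebraic number of controlled degree and height over $\mathbb{L}$, supplies a lower bound of the form $\exp\bigl(-C\, d_{\mathbb{L}}\, \prod_j A_j\bigr)$. The clash between the two bounds is what forces $-\log|\Lambda|$ to exceed the claimed quantity.

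The main obstacle, and the technical core of Matveev's contribution, is the precise parameter optimization that yields the factors $30^{s+3}\, s^{4.5}\, d_{\mathbb{L}}^2(1+\log d_{\mathbb{L}})$ rather than the cruder $s^s$-type losses appearing in Baker's original argument. Matveev achieves this through a refined choice of Feldman polynomials together with a Kummer-type descent in an auxiliary extension of $\mathbb{L}$, which improves the dependence on the number $s$ of logarithms. Reproducing the correct exponent of $s$ (and the absence of a factor $\prod(1+\log A_j)$) requires following Matveev's original argument with care, and this optimization is where essentially all of the delicate estimation lies; the skeleton of auxiliary construction, extrapolation, and Liouville lower bound described above is classical and, on its own, would only give a Baker-type bound with worse constants.
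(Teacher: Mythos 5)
This statement is not proved in the paper at all: it is Matveev's theorem, imported verbatim by citation (from Matveev's paper, or Theorem 9.4 of Bugeaud--Mignotte--Siksek) and used as a black-box tool in the proofs of Theorems 1.1 and 1.2. So there is no internal proof to compare your attempt against, and the expected ``proof'' here is simply the reference.

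As a blind reconstruction, your outline is a reasonable roadmap of how such results are actually established -- auxiliary polynomial via Siegel's lemma, extrapolation exploiting the assumed smallness of $\Lambda$, a Liouville-type lower bound to force the contradiction, and (specific to Matveev) the Kummer descent and refined Feldman polynomials that produce the $30^{s+3}s^{4.5}$-type constant without a $\prod_j(1+\log A_j)$ factor. But it is not a proof: every quantitative ingredient is deferred. None of the parameters $L_j$, $A_j$, $E$ are chosen; the counting argument behind Siegel's lemma is not carried out; the extrapolation range and the multiplicity estimate it relies on are not specified; and, as you concede yourself, the entire derivation of the explicit constants $1.4\cdot 30^{s+3}\cdot s^{4.5}\cdot d_{\mathbb{L}}^2(1+\log d_{\mathbb{L}})$ and of the precise admissible choice $B_j\geq\max\{d_{\mathbb{L}}h(\eta_j),|\log\eta_j|,0.16\}$ is exactly the part you do not attempt. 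Reproducing that occupies tens of pages in Matveev's original articles. For a statement of this kind the correct move is to cite it, not to sketch it; if one insists on proving it, the sketch above would need to be expanded into the full argument, and the gap between your outline and that argument is essentially the whole theorem.
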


	\subsection{The reduction method}
	Our next tool is a version of the reduction method of Baker and Davenport (see \cite{Baker}). Here, we use a slight variant of the version given by Dujella and Peth\"{o} (see \cite{Dujella}). For a real number $x$, we write $||x||$ for the distance from  $x$ to the nearest integer.
	
	\begin{lemma}\label{lem 2.2}
		Let $M$ be a positive integer, $p/q$ be a convergent of the continued fraction of the irrational $\tau$ such that $q > 6M$, and $A, B, \mu $ be some real numbers with $A>0$ and $B>1$. Furthermore, let
		\[\epsilon:=||\mu q|| - M \cdot ||\tau q||.
		\]
		If $\epsilon >0$, then there is no solution to the inequality 
		\[
		0< |u \tau - v + \mu| <AB^{-w}
		\]
		in positive integers $u$, $v$ and $w$ with
		\[
		u \leq M \quad\text{and}\quad w \geq \frac{\log(Aq/\epsilon)}{\log B}.
		\]	
	\end{lemma}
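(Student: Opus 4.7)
The approach is a direct contradiction argument that leverages the defining Diophantine approximation property of a continued-fraction convergent. The plan is to assume there exist positive integers $u,v,w$ satisfying $u \leq M$, $w \geq \log(Aq/\epsilon)/\log B$, and $0 < |u\tau - v + \mu| < AB^{-w}$, and then produce a contradiction with the hypothesis $\epsilon > 0$.

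First I would multiply the assumed inequality through by $q$ to obtain $|uq\tau - vq + \mu q| < qAB^{-w}$. Let $p$ denote the numerator of the chosen convergent so that $|q\tau - p| = ||\tau q||$, and write $uq\tau = up + u(q\tau - p)$. Since $up - vq$ is an integer and $||\mu q||$ is precisely the distance from $\mu q$ to the nearest integer, one has $|up - vq + \mu q| \geq ||\mu q||$. Combining this with the triangle inequality gives
\[
||\mu q|| \;\leq\; |up - vq + \mu q| \;\leq\; |uq\tau - vq + \mu q| + u\,|q\tau - p| \;<\; qAB^{-w} + M\,||\tau q||.
\]
Rearranging yields $\epsilon = ||\mu q|| - M\,||\tau q|| < qAB^{-w}$, whence $B^{w} < qA/\epsilon$. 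Taking logarithms produces $w < \log(qA/\epsilon)/\log B$, contradicting the assumed lower bound on $w$.

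The main obstacle, and really the only delicate point, is the identification of $p$ with the integer nearest to $q\tau$ so that $|q\tau - p|$ coincides with $||\tau q||$. This is legitimate because the numerator of a convergent of the continued fraction of an irrational number realizes the best rational approximation of denominator at most $q$; together with the bound $|q\tau - p| < 1/q_{k+1} \leq 1/2$ valid once the convergent is non-trivial, this forces $p$ to be the closest integer to $q\tau$. The hypothesis $q > 6M$ plays the role of a largeness condition on the convergent denominator, ensuring the term $M\,||\tau q|| \leq M/q$ is small enough to leave $\epsilon$ genuinely positive in applications; beyond this, no machinery past elementary continued-fraction theory is required.
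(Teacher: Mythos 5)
Your argument is correct and is essentially the standard proof of the Dujella--Peth\H{o} reduction lemma; the paper itself states Lemma \ref{lem 2.2} without proof, citing \cite{Dujella}, and your chain $\|\mu q\| \leq |up - vq + \mu q| \leq |uq\tau - vq + \mu q| + u|q\tau - p| < qAB^{-w} + M\|\tau q\|$ is exactly the argument given there. The one delicate point, identifying $|q\tau - p|$ with $\|\tau q\|$, you handle correctly via $|q\tau - p| < 1/q_{k+1} < 1/2$ (which is where $q > 6M \geq 6$ quietly guarantees the convergent is far enough along), so nothing is missing.
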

	The above lemma cannot be applied when $\mu$ is a linear combination of 1 and $\tau$, since then $\epsilon < 0$. In this case, we use the following nice property of continued fractions (see Theorem 8.2.4 and top of page 263 in \cite{Murty})
	
	\begin{lemma}\label{lem 2.3}
		Let $\tau$ be an irrational number, $\frac{p_0}{q_0}, \frac{p_1}{q_1}, \frac{p_2}{q_2}, \dots$ be all the convergents of the continued fraction of $\tau$, and $M$ be a positive integer. Let $N$ be a non-negative integer such that $q_N > M$. Then putting $a(M) := \max \{a_i: i=0,1,2,\dots, N \}$, the inequality
		\[
		\Bigm| x \tau - y \Bigm| > \frac{1}{(a(M)+2)x},
		\]
		holds for all $x < M$.
	\end{lemma}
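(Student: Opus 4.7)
The plan is to reduce the inequality to a statement about how well the convergents of the continued fraction of $\tau$ approximate $\tau$, and then to invoke the classical identity expressing $|q_n\tau - p_n|$ in terms of the complete quotient. First, given a positive integer $x < M$, I would locate $x$ between two consecutive convergent denominators: choose the unique $n \geq 0$ with $q_n \leq x < q_{n+1}$. Since $x < M < q_N$, this forces $n+1 \leq N$, so in particular the partial quotient $a_{n+1}$ is dominated by $a(M)$.

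Next I would apply the theorem on best rational approximations of the second kind, which states that the convergent $p_n/q_n$ minimises $|q\tau - p|$ over all fractions $p/q$ with denominator $1 \leq q < q_{n+1}$. Applied here, this gives
\[
|x\tau - y| \;\geq\; |q_n \tau - p_n|
\]
for every integer $y$. The final ingredient is the classical identity
\[
|q_n \tau - p_n| \;=\; \frac{1}{q_n \alpha_{n+1} + q_{n-1}},
\]
where $\alpha_{n+1} = [a_{n+1}; a_{n+2}, a_{n+3}, \ldots]$ is the $(n+1)$-st complete quotient and satisfies $\alpha_{n+1} < a_{n+1}+1$. Combining with $q_{n-1} < q_n \leq x$ yields
\[
|q_n \tau - p_n| \;>\; \frac{1}{q_n(a_{n+1}+1) + q_n} \;=\; \frac{1}{q_n(a_{n+1}+2)} \;\geq\; \frac{1}{x(a(M)+2)},
\]
which is the desired bound.

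The difficulty here is not analytic but rather bookkeeping. One must verify that the index $n$ produced above really does satisfy $n+1 \leq N$, so that $a_{n+1}$ is dominated by $a(M)$ and not by some partial quotient that lies outside the maximum; this is exactly where the hypothesis $q_N > M$ is used. One must also make sure the weak inequality $|x\tau - y| \geq |q_n\tau - p_n|$ can be upgraded to the strict inequality required in the statement, including when $(x,y) = (q_n,p_n)$; strictness is supplied for free by $\alpha_{n+1} < a_{n+1}+1$, which holds because $1/\alpha_{n+2} \in (0,1)$ for irrational $\tau$. Once those two routine checks are in place, the three-step chain above delivers the lemma.
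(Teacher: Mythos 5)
Your argument is correct. The paper itself offers no proof of this lemma --- it is quoted from Murty--Esmonde (Theorem 8.2.4) --- and your three-step chain (locate $x$ with $q_n \leq x < q_{n+1}$, invoke the best-approximation-of-the-second-kind property to reduce to $|q_n\tau - p_n|$, then use $|q_n\tau - p_n| = 1/(q_n\alpha_{n+1}+q_{n-1})$ with $\alpha_{n+1} < a_{n+1}+1$) is precisely the standard textbook derivation that the cited reference supplies. Two cosmetic remarks: you only need $q_{n-1} \leq q_n$ (equality can occur when $n=1$ and $a_1=1$), which still gives $q_n\alpha_{n+1}+q_{n-1} < (a_{n+1}+2)q_n$ since strictness already comes from $\alpha_{n+1} < a_{n+1}+1$; and the best-approximation theorem is usually stated for $n \geq 1$, so the case where your chosen index is $n=0$ (which forces $a_1 \geq 2$) deserves the one-line check that $p_0/q_0$ is then genuinely a best approximation of the second kind --- it is, and the exceptional case $a_1 = 1$ is excluded automatically because then $q_1 = q_0$ and the index $0$ is never selected.
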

	
	\subsection{Properties of $k$-Fibonacci sequence}
	We recall some of the facts and properties of the $k$-generalized Fibonacci sequence which will be used after. Note that the characteristic polynomial of the $k$-generalized Fibonacci sequence is
	\[
	\Psi_{k}(x) = x^k - x^{k-1} - \dots - x - 1.
	\]
	$\Psi_{k}(x)$ is irreducible over $\mathbb{Q}[x]$ and has just one root outside the unit circle. It is real and positive, so it satisfies $\varphi(k) > 1$. The other roots are strictly inside the unit circle. Throughout this paper, $\varphi := \varphi(k)$ denotes that single root, which is located between $2(1-2^{-k})$ and $2$ (see \cite{Wolfram}). To simplify notation, we will omit the dependence on $k$ of $\varphi$. Dresden and Du \cite{Dresden} gave a simplified Binet-like formula for $F_{n}^{(k)}$:
	\begin{equation}\label{eq 2.7}
		F_{n}^{(k)} = \displaystyle\sum_{i=1}^{k} f_{k} (\varphi_{i})\varphi_{i}^{n-1} = \sum_{i=1}^{k} \frac{\varphi_{i}-1}{2+(k+1)(\varphi_{i}-2)} \varphi_{i}^{n-1} ,
	\end{equation}
	where  $\varphi := \varphi_{1}, \varphi_{2},\dots , \varphi_{k}$  are the roots of the characteristic polynomial $\Psi_{k}(x)$. It was also shown in \cite{Dresden} that the contribution of the roots lying inside the unit circle to the formula \eqref{eq 2.7} is very small, which is given by the approximation 
	\begin{equation}\label{eq 2.8}
		\left| F_{n}^{(k)} -  f_{k} (\varphi) \varphi^{n-1} \right| < \frac{1}{2}
	\end{equation}
		holds for all $n \geq 1$ and $k \geq 2$. So, for $n \geq 1$ and $k \geq 2$, we have
	\begin{equation}\label{eq 2.9}
		F_{n}^{(k)} = f_{k}(\varphi) \varphi^{n-1} + e_{k}(n), \quad \text{where} \quad |e_{k}(n)| \leq \frac{1}{2}. 
	\end{equation}
	Furthermore, it was shown by Bravo and Luca in \cite{ Bravo} that the inequality
	\begin{equation}\label{eq 2.10}
		\varphi ^ {n-2} \leq F_{n} ^ {(k)} \leq \varphi ^ {n-1} \text{ holds for all } n \geq 1 ~{\rm and}~ k \geq 2.
	\end{equation}
	One can observe that the first $k + 1$ non-zero terms in $F_{n}^{(k)}$ are powers of $2$, namely
	\[
	F_{1}^{(k)}=1, F_{2}^{(k)}=1, F_{3}^{(k)}=2, F_{4}^{(k)}=4, \dots, F_{k+1}^{(k)} = 2^{k-1},
	\]
	while the next term in the above sequence is $F_{k+2}^{(k)} = 2^k -1$. Thus, we have that
	\[
	F_{n}^{(k)} = 2^{n-2}~\text{holds for all}~2 \leq n \leq k+1.
	\]
	The following result was proved by Bravo and Luca \cite{Bravo}.
	\begin{lemma}\label{lem 2.4}
		Let $k \geq 2$, $\varphi$ be the dominant root of   $\{F_{n}^{(k)}\}_{n \geq -(k-2)}$, and consider the function defined in \eqref{eq 2.7}. Then 
		\[
		\frac{1}{2} < f_{k}(\varphi) < \frac{3}{4} \quad \text{and} \quad \left|f_{k}(\varphi^{(i)})  \right| < 1, \quad  2 \leq i \leq k.
		\]	
	\end{lemma}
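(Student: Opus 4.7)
The plan is to analyze the rational function $f_k(z) = (z-1)/\bigl((k+1)z - 2k\bigr)$ directly. A quick calculation gives $f_k'(z) = -(k-1)/\bigl((k+1)z-2k\bigr)^2$, so $f_k$ is strictly decreasing on the branch containing $\varphi$. One also checks immediately that $f_k(2) = 1/2$ and $f_k\bigl((6k-4)/(3k-1)\bigr) = 3/4$, so the two target thresholds correspond to explicit, easily compared values of the argument.

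For the first assertion I would use the known bounds $2(1 - 2^{-k}) < \varphi < 2$. The lower bound $f_k(\varphi) > 1/2$ follows from $\varphi < 2$ and monotonicity. For the upper bound $f_k(\varphi) < 3/4$ I need $\varphi > (6k-4)/(3k-1) = 2 - 2/(3k-1)$; since $\varphi > 2 - 2^{1-k}$, this reduces to $2^k \geq 3k - 1$, which holds for every $k \geq 3$ (with equality at $k=3$) by a trivial induction, combined with the strictness of $\varphi > 2 - 2^{1-k}$. The case $k=2$ is handled by hand using $\varphi_2 = (1+\sqrt{5})/2$: a short rationalisation gives $f_2(\varphi_2) = (5+\sqrt{5})/10$, which lies strictly between $1/2$ and $3/4$.

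For the second assertion, since every non-dominant root satisfies $|\varphi^{(i)}| < 1$, the triangle inequality gives $|\varphi^{(i)} - 1| < 2$, while the reverse triangle inequality yields $|(k+1)\varphi^{(i)} - 2k| \geq 2k - (k+1) = k - 1$. Hence $|f_k(\varphi^{(i)})| < 2/(k-1)$, which is already less than $1$ for $k \geq 4$. The cases $k=2$ and $k=3$ require separate treatment by explicit evaluation at the other roots of $\Psi_k$: for $k=2$ one computes $|f_2(\varphi^{(2)})| = (5-\sqrt{5})/10 < 1$, and for $k=3$ the two complex conjugate roots have modulus $1/\sqrt{\varphi_3}$ (from the constant term of $\Psi_3$, whose coefficients force $\varphi\cdot|\varphi^{(2)}|^2 = 1$), together with negative real part, so the same triangle-inequality estimate gives a bound comfortably below $1$.

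The main obstacle is precisely this small-$k$ behaviour in the second assertion: the crude bound $2/(k-1)$ just fails at $k = 2, 3$, and any uniform treatment would require a sharper estimate exploiting not merely $|\varphi^{(i)}| < 1$ but also the sign of $\Re(\varphi^{(i)})$; otherwise one must simply dispatch $k = 2, 3$ by explicit computation at the roots of $\Psi_k$ as indicated above.
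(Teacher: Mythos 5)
The paper does not prove this lemma at all: it is imported verbatim from Bravo and Luca \cite{Bravo} (``The following result was proved by Bravo and Luca''), so there is no in-paper argument to compare against. Judged on its own, your proof is correct and self-contained, and it is in the same elementary spirit as the original source: write $f_k(z)=(z-1)/((k+1)z-2k)$, use monotonicity on the real branch to the right of the pole $2k/(k+1)$ together with $2-2^{1-k}<\varphi<2$ for the first assertion, and the triangle inequality with $|\varphi^{(i)}|<1$ for the second. All the computations check out: $f_k(2)=1/2$, the threshold $f_k\bigl(2-2/(3k-1)\bigr)=3/4$, the reduction to $2^k\geq 3k-1$, and the explicit values $f_2(\varphi)=(5+\sqrt5)/10$ and $f_2(\varphi^{(2)})=(5-\sqrt5)/10$. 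One small remark: your claim that the crude bound ``just fails'' at $k=3$ is unnecessarily pessimistic. Since $|\varphi^{(i)}-1|<2$ and $|(k+1)\varphi^{(i)}-2k|>k-1$ are both \emph{strict}, for $k=3$ you get $|f_3(\varphi^{(i)})|<2/2=1$, which is exactly the assertion; only $k=2$ genuinely needs the separate explicit evaluation. This does not create a gap, since you dispatch $k=3$ anyway via $|\varphi^{(2)}|=1/\sqrt{\varphi_3}$, but the argument could be streamlined accordingly.
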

	\noindent
	In addition they proved that the logarithmic height of $f$ satisfies 
	\begin{equation}\label{eq 2.11}
		h(f_{k}(\varphi)) < \log(k+1) + \log4 \quad \text{for all} \quad k \geq 2.
	\end{equation}
	Bravo et al. \cite{Bravo1} proved that for all $n \geq k+2$, we have
	\begin{equation}\label{eq 2.12}
		F_{n}^{(k)} = 2^{n-2}(1 + \xi), \quad \text{where} \quad  |\xi| < \frac{1}{2^{k/2}}.
	\end{equation}

\subsection{Other useful lemmas}
We conclude this section by recalling two lemmas that we will need in this work.
	
	\begin{lemma}\label{lem 2.5} {\rm{(\cite{Weger}, Lemma 2.2)}}
		Let $a, x \in \mathbb{R}$. If $0< a < 1$ and $|x| < a$, then 
		\[
		|\log(1+x)| < \frac{-\log(1-a)}{a}\cdot |x|
		\]
		and 
		\[
		|x| < \frac{a}{1 - e^{-a}} \cdot |e^{x} - 1|.
		\]
	\end{lemma}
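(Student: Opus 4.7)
The plan is to deduce each inequality from the monotonicity of a single auxiliary function, obtained by dividing the left-hand side by $|x|$ and examining its power-series expansion.

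For the first inequality I would begin with the estimate $|\log(1+x)| \le -\log(1-|x|)$ valid for $|x|<1$, which follows from the triangle inequality applied to the Taylor series $\log(1+x) = \sum_{n\ge 1} (-1)^{n-1} x^n/n$, giving $|\log(1+x)| \le \sum_{n\ge 1} |x|^n/n = -\log(1-|x|)$. I would then introduce the auxiliary function $\Phi(t) := -\log(1-t)/t = \sum_{n\ge 1} t^{n-1}/n$ on $(0,1)$; since its power series has strictly positive coefficients, $\Phi$ is strictly increasing. Hence for $0 < |x| < a$ one has $-\log(1-|x|)/|x| < -\log(1-a)/a$, and multiplying through by $|x|$ and combining with the initial series bound yields the desired strict inequality.

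For the second inequality I would introduce $\Psi(x) := (e^x - 1)/x$, extended continuously by $\Psi(0) = 1$. The Taylor expansion $\Psi(x) = \sum_{n\ge 0} x^n/(n+1)!$ shows $\Psi(x) > 0$ for all real $x$, so $|e^x - 1|/|x| = \Psi(x)$. The central step is to verify that $\Psi$ is strictly increasing on all of $\mathbb{R}$: for $x > 0$ this is immediate from the positivity of the Taylor coefficients, while for $x < 0$ it reduces via the substitution $x = -y$ to the strict monotonicity of $y \mapsto (1-e^{-y})/y$ on $(0,\infty)$, which follows from the elementary inequality $ye^{-y} < 1-e^{-y}$ for $y > 0$ (the sign of the numerator of its derivative). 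Once $\Psi$ is known to be strictly increasing, the hypothesis $|x| < a$ gives $x > -a$, so $\Psi(x) > \Psi(-a) = (1-e^{-a})/a$; rearranging this delivers the claimed bound.

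No genuine obstacle is anticipated, since both inequalities reduce to strict monotonicity of explicit power series. The only mild subtlety is securing the strict inequality in the conclusions, which is achieved by coupling a (possibly non-strict) term-by-term series bound with the strict monotonicity of the auxiliary function on the open interval cut out by $|x| < a$.
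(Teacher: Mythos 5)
Your argument is correct. Note that the paper itself gives no proof of this lemma: it is imported verbatim from de Weger (Lemma~2.2 of the cited monograph), so there is no in-paper argument to compare against. Your two reductions --- the bound $|\log(1+x)|\le-\log(1-|x|)$ followed by the strict monotonicity of $t\mapsto -\log(1-t)/t$ on $(0,1)$, and the strict monotonicity of $x\mapsto(e^{x}-1)/x$ on $\mathbb{R}$ via the inequality $e^{y}>1+y$ --- are the standard elementary route and are carried out correctly. The only caveat, which is an imprecision in the lemma as stated rather than in your proof, is that at $x=0$ both sides of each inequality vanish, so the strict inequalities should really be read for $x\neq 0$; your proof correctly establishes them on $0<|x|<a$, which is the only case in which the lemma is ever invoked in the paper.
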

	
	\begin{lemma}\label{lem 2.6} {\rm{(\cite{Sanchez}, Lemma 7)}}
		If $m \geq 1$, $S \geq (4m^{2})^{m}$ and $\frac{x}{(\log x)^{m}} < S$, then $x < 2^{m} S (\log S)^{m}$.
	\end{lemma}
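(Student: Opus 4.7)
The plan is to recast the hypothesis as $f(x) < S$ where $f(t) := t/(\log t)^m$, locate a distinguished value $T$ at which $f(T) \geq S$, and then invoke monotonicity of $f$ to conclude $x < T$. The natural candidate is $T := 2^m S (\log S)^m$, matching the desired conclusion exactly.

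First I would compute $f'(t) = (\log t - m)/(\log t)^{m+1}$, showing that $f$ is strictly increasing on $(e^m, \infty)$. A one-line check using $S \geq (4m^2)^m$ verifies that $T > e^m$. The case $x \leq e^m$ can be dispatched immediately by direct comparison with $T$; so I may assume $x > e^m$, which places both $x$ and $T$ in the monotone range of $f$. By monotonicity it then suffices to prove $f(T) \geq S$, because then $f(x) < S \leq f(T)$ forces $x < T$, as desired.

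Evaluating $f(T) = 2^m S(\log S)^m / (\log T)^m$ and clearing the power of $m$, the inequality $f(T) \geq S$ reduces to
\[
\log S \ \geq \ m \log(2 \log S).
\]
Setting $L := \log S$, the hypothesis $S \geq (4m^2)^m$ becomes $L \geq 2m \log(2m)$. I would introduce $g(L) := L - m \log(2L)$, note that $g'(L) = 1 - m/L > 0$ for $L > m$, and hence reduce to checking $g(2m \log(2m)) \geq 0$. A short algebraic simplification turns this into the elementary bound $m \geq \log(2m)$, which is trivial for all $m \geq 1$.

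The main obstacle is this last step: the hypothesis $S \geq (4m^2)^m$ is just strong enough to absorb the iterated-logarithm term $m \log \log S$ appearing in $\log T$, and the proof hinges on choosing the auxiliary function $g$ and invoking the right monovariant $m \geq \log(2m)$. Everything else is formal monotonicity.
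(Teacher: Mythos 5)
Your proposal is correct, but note that the paper contains no proof of this statement to compare against: Lemma~\ref{lem 2.6} is imported verbatim from the cited source (S\'anchez--Luca, Lemma~7) and used as a black box. Judged on its own terms, your argument is complete. I checked the key steps: $f'(t)=(\log t-m)/(\log t)^{m+1}$ is right, so $f$ is strictly increasing for $t>e^m$; the inequality $f(T)\geq S$ with $T=2^mS(\log S)^m$ does reduce, after taking $m$-th roots, to $2\log S\geq\log T=m\log 2+\log S+m\log\log S$, i.e.\ $\log S\geq m\log(2\log S)$; the hypothesis gives $L:=\log S\geq 2m\log(2m)>m$, so $g(L)=L-m\log(2L)$ is increasing on the relevant range, and $g(2m\log(2m))\geq 0$ unwinds to $4m^2\geq 4m\log(2m)$, i.e.\ $m\geq\log(2m)$, which holds for all $m\geq1$. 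The only point worth flagging is that the boundary case needs $T>e^m$, which your hypotheses do supply since $T\geq\left(16m^3\log 2\right)^m>e^m$. For comparison, the proof in the cited source runs in the opposite direction: it takes logarithms of $x<S(\log x)^m$ to get $\log x<\log S+m\log\log x$, bootstraps this to $\log x<2\log S$ using the same lower bound on $\log S$, and substitutes back to get $x<S(2\log S)^m$. Both routes hinge on the identical inequality $\log S\geq m\log(2\log S)$; yours packages it as monotone inversion of $f$, which is slightly longer but arguably cleaner in that it isolates exactly where the hypothesis $S\geq(4m^2)^m$ is consumed.
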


\section{Proof of Theorem \ref{thm1}}
We begin our analysis of \eqref{eq 1.4} for $2 \leq n \leq k+1$. In this case, it is known that $F_{n}^{(k)} = 2^{n-2}$, thus the equation \eqref{eq 1.4} transform into
	\[
	B_{l} = 2^{m+n-4},
	\]
which has no solution in the range $2 \leq n \leq k+1$. From now on, we consider that $n \geq k+2$ and $k \geq 3$.

\subsection{An upper bound for l versus n}
Combining inequalities \eqref{eq 1.2} and \eqref{eq 2.10} with the equation \eqref{eq 1.4}, we have
	\[
	\gamma^{l-1} \leq B_{l} =  F_{n}^{(k)} F_{m}^{(k)} \leq  \varphi^{n+m-2} < \varphi^{2n-2}.
	\]
This leads us to the inequality
	\[
	l \leq 1 + (2n - 2) \left( \frac{\log \varphi}{\log \gamma}  \right).
	\]
By using this and taking into account that $2(1-2^{-k}) < \varphi(k) < 2$ for all $k \geq 3$, it result
	\begin{equation}\label{eq 3.13}
		l < 0.8 n + 0.2.    
	\end{equation}
	
\subsection{Bounding $n$ in terms of $k$}
In this subsection, we will bound $n$ in terms of $k$. Namely, we will show the following lemma.
\begin{lemma}\label{lem 3.1}
If $(l, k, n, m)$ is an integer solution   of \eqref{eq 1.4} with $k \geq 3$ and $n \geq k+2$, then the inequalities 
\begin{equation}\label{eq 3.14}
n < 5.1 \times 10^{31} k^{8} \log^{5} k
\end{equation}
hold.
\end{lemma}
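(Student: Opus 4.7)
The plan is to apply Matveev's theorem (Theorem~\ref{thm3}) twice in cascade: a first linear form bounds $m$ in terms of $n$ and $k$, a second bounds $n$ in terms of $m$ and $k$, and the two inequalities are then combined by substitution and an application of Lemma~\ref{lem 2.6}. All algebraic computations take place in the number field $\mathbb{L}=\mathbb{Q}(\sqrt{2},\varphi)$ of degree $d_{\mathbb{L}}=2k$, and \eqref{eq 3.13} is used throughout to control the exponent parameter $D$ in Matveev via $l<n$.

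For the first step I substitute the Binet formula \eqref{eq 1.1} for $B_l$ and the Dresden--Du approximation \eqref{eq 2.9} for \emph{both} $F_n^{(k)}$ and $F_m^{(k)}$ in \eqref{eq 1.4}, isolate the dominant monomial, and bound the resulting error by $O(\varphi^{n-1})$ (using $m\le n$, $|\delta|<1$ and $|e_k|\le\tfrac12$). Dividing by $f_k(\varphi)^2\varphi^{n+m-2}$ produces
\[
\Lambda_1 := \gamma^l\bigl(4\sqrt{2}\,f_k(\varphi)^2\bigr)^{-1}\varphi^{-(n+m-2)} - 1, \qquad |\Lambda_1| < C_1\,\varphi^{-m}.
\]
Applying Theorem~\ref{thm3} with $s=3$ to the grouping $\eta_1=\gamma$, $\eta_2=4\sqrt{2}\,f_k(\varphi)^2$, $\eta_3=\varphi$, together with $h(\eta_2)\le h(4\sqrt{2})+2h(f_k(\varphi))=O(\log k)$ from \eqref{eq 2.11}, $h(\gamma)=\tfrac12\log\gamma$ and $h(\varphi)\le(\log 2)/k$, then yields a bound of the shape $m<c\,k^4(\log k)^2\log n$.

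For the second step I replace only $F_n^{(k)}$ by its dominant approximation and keep $F_m^{(k)}$ as an integer factor, giving
\[
\Lambda_2 := \gamma^l\bigl(4\sqrt{2}\,F_m^{(k)}f_k(\varphi)\bigr)^{-1}\varphi^{-(n-1)} - 1, \qquad |\Lambda_2| < C_2\,\varphi^{-n}.
\]
A second application of Theorem~\ref{thm3}, again with $s=3$ and now $\eta_2=4\sqrt{2}\,F_m^{(k)}f_k(\varphi)$, uses the height estimate $h(\eta_2)\le h(F_m^{(k)})+O(\log k)\le m\log\varphi+O(\log k)$ to produce $n<c'\,k^4\,m\,\log k\,\log n$. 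Substituting the previous bound on $m$ yields $n<c''k^8(\log k)^3(\log n)^2$, after which Lemma~\ref{lem 2.6} with parameter $m=2$ converts the estimate into $n<5.1\times 10^{31}k^8\log^5 k$ once the numerical Matveev constants are tracked.

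The main technical points are twofold. First, before invoking Matveev one must verify $\Lambda_1,\Lambda_2\neq 0$; this is done by a standard Galois-conjugate argument, applying an automorphism of $\mathbb{L}$ that fixes $\mathbb{Q}(\sqrt{2})$ and sends $\varphi$ to a subdominant conjugate $\varphi_i$ (with $|\varphi_i|<1$ and $|f_k(\varphi_i)|<1$ from Lemma~\ref{lem 2.4}), which forces a contradiction with the magnitude of $\gamma^l$. Second, the decision to group $4\sqrt{2}$, $f_k(\varphi)^2$ (respectively $4\sqrt{2}$, $F_m^{(k)}$, $f_k(\varphi)$) into a single algebraic number so that $s=3$ rather than $s\ge 4$ is critical: it keeps the final exponents at $k^8(\log k)^5$, whereas splitting them would give a strictly weaker bound such as $k^{11}(\log k)^6$, incompatible with the statement. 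The numerical coefficient $5.1\times 10^{31}$ emerges from the factor $1.4\cdot 30^6\cdot 3^{4.5}$ of Matveev's theorem for $s=3$, multiplied by $d_{\mathbb{L}}^2=4k^2$ and the various height contributions.
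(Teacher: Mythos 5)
Your proposal follows essentially the same route as the paper: the same two cascaded applications of Matveev's theorem with $s=3$ in $\mathbb{L}=\mathbb{Q}(\varphi,\sqrt{2})$ of degree $2k$, the same groupings $\frac{f_k^{-2}(\varphi)}{4\sqrt{2}}$ and $\frac{f_k^{-1}(\varphi)}{4\sqrt{2}F_m^{(k)}}$ into a single third algebraic number, the same Galois-conjugation argument for nonvanishing, and the same final use of Lemma~\ref{lem 2.6} with $m=2$. The argument is correct and matches the paper's proof in all essentials.
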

\begin{proof}
Using Binet's formulas \eqref{eq 1.1} and \eqref{eq 2.9}, we rewrite the equation \eqref{eq 1.4} as
\begin{equation}\label{eq 3.15}
		\frac{\gamma^{l} - \delta^{l}}{4 \sqrt{2}} = 
			\left(f_{k}(\varphi) \varphi^{n-1} +e_{k}(n) \right) \left(f_{k}(\varphi) \varphi^{m-1} +e_{k}(m) \right),
\end{equation}
which implies
	\[
		\frac{\gamma^{l}}{4 \sqrt{2}} - f_{k}^{2}(\varphi) \varphi^{n+m-2}  = f_{k}(\varphi) \varphi^{n-1} e_{k}(m) + f_{k}(\varphi) \varphi^{m-1} e_{k}(n) + e_{k}(n) e_{k}(m) + \frac{\delta^{l}}{4 \sqrt{2}}. 
	\]
Thus, we obtain
\begin{equation}\label{eq 3.16}
	\left| \frac{\gamma^{l}}{4 \sqrt{2}} - f_{k}^{2}(\varphi) \varphi^{n+m-2} \right| < \frac{ f_{k}(\varphi)}{2} \varphi^{n-1} + \frac{ f_{k}(\varphi)}{2} \varphi^{m-1} + \frac{1}{4} + \frac{|\delta|^{l}}{4 \sqrt{2}}.
\end{equation}
Dividing the above inequality by $f_{k}^{2}(\varphi) \varphi^{n+m-2}$ and using the fact $f_{k}(\varphi) > 1/2$, we get
\begin{equation}\label{eq 3.17}
		\left| \gamma^{l} \varphi^{-(n+m-2)} \frac{ f_{k}^{-2}(\varphi)}{4 \sqrt{2}} -1 \right| < \frac{1}{\varphi^{m-1}} + \frac{1}{\varphi^{n-1}} + \frac{4}{\varphi^{n+m-2}} < \frac{6}{\varphi^{m-1}}.
\end{equation}
Let
\begin{equation}\label{eq 3.18}
		\Lambda_{1} :=  \gamma^{l} \varphi^{-(n+m-2)} \frac{f_{k}^{-2}(\varphi)}{4 \sqrt{2}} -1.
\end{equation}
From \eqref{eq 3.17}, we have 
\begin{equation} \label{eq 3.19}
	|\Lambda_{1}| < 6 \cdot \varphi^{-(m-1)}.
\end{equation}
Suppose that $\Lambda_{1}= 0$, then we get
	\[
		\frac{\gamma^{l}} {4 \sqrt{2}} = f_{k}^{2}(\varphi) \varphi^{n+m-2}.
	\]        
Conjugating the above relation with an automorphism $\sigma$ of the Galois extension of $\mathbb{Q}(\varphi, \gamma)$ over $\mathbb{Q}$ given by $\sigma(\varphi)=\varphi_{i} \ (i > 1)$ and $\gamma \to \gamma$, we obtain
\[
 \frac{\gamma^{l}} {4 \sqrt{2}} = f_{k}^{2}(\varphi_{i}) \varphi_{i}^{n+m-2} 
\]
for some $i > 1$, where  $\varphi_{i}$ represents the roots of the characteristic polynomial $\Psi_{k}(x)$.  Using Lemma \ref{lem 2.4} with the fact $|\varphi_{i}| < 1$ and taking absolute value in the above relation we see that this is not possible since its right-hand side exceeds $1$ for all $l \geq 1$, while its left-hand side is smaller than 1. Hence $\Lambda_{1} \neq 0$. Therefore, we apply Theorem \ref{thm3} to get a lower bound for $\Lambda_{1}$ given by \eqref{eq 3.18} with the parameters:
\[
	\eta_{1} := \gamma, \quad \eta_{2} := \varphi , \quad \eta_{3} := \frac{f_{k}^{-2}(\varphi)}{4 \sqrt{2}},
\]
and
\[ 
a_{1}:= l, \quad a_{2}:= -(n+m-2), \quad a_{3}:= 1.
\]
Note that the algebraic numbers $\eta_{1}, \eta_{2}, \eta_{3}$ belongs to the field  $\mathbb{L} := \mathbb{Q}(\varphi, \sqrt{2})$, so we can assume $d_{\mathbb{L}} = [\mathbb{L}:\mathbb{Q}] = 2k$. Since $h(\eta_{1}) = (\log \gamma) / 2$ and $h(\eta_{2}) = (\log \varphi) / k < (\log 2) /k$, it follows that 
\[
	\max\{2kh(\eta_{1}),|\log \eta_{1}|,0.16\} = k \log \gamma := B_{1}
\]
and 
\[
	\max\{2kh(\eta_{2}),|\log \eta_{2}|,0.16\} = 2 \log 2 := B_{2}.
\]
Using the estimate \eqref{eq 2.11} and the properties of logarithmic height, it follows that for all $k \geq 3$
\begin{align*}
		h(\eta_{3})  & < 2 h(f_{k}(\varphi)) + h(4 \sqrt{2}) \\
			& < 2 \left( \log(k+1) + \log 4 \right) +  \frac{\log 32}{2} \\
			& < 6.7 \log k.  
\end{align*}
Thus, we obtain
		\[
		\max\{2kh(\eta_{3}),|\log \eta_{3}|,0.16\} = 13.4 k \log k := B_{3}.
		\]
Finally, the fact that $m \leq n$ and the inequality \eqref{eq 3.13} imply that we can take $D := 2 n$. Therefore, according to Theorem \ref{thm3}, it comes that 
		\begin{equation}\label{eq 3.20}
			\log |\Lambda_{1}| >   -1.432 \times 10^{11} (2k)^{2} (1+ \log 2k) (1 +\log 2n) (k \log \gamma) (2 \log 2) (13.4 k \log k).   
		\end{equation}
From the comparison of lower bound \eqref{eq 3.20} and upper bound \eqref{eq 3.19} of   $|\Lambda_{1}|$ gives us
\[
	(m-1) \log \varphi - \log 6 < 1.88 \times 10^{13} k^{4} \log k (1 + \log 2k)(1+ \log 2n).
\]
Using the facts $1 + \log 2k < 2.6 \log k$ for all $k \geq 3$ and $1+ \log 2n < 2.1 \log n$ for all $n \geq 5$, we conclude that 
\begin{equation}\label{eq 3.21}
		(m-1)  \log \varphi < 1.2 \times 10^{14} k^{4} \log^{2} k \log n.    
\end{equation}
In order to apply Theorem \ref{thm3} a second time, we go back to \eqref{eq 1.4} and we express it as
\[
	\frac{\gamma^{l} - \delta^{l}}{4 \sqrt{2}} = 
		\left(f_{k}(\varphi) \varphi^{n-1} +e_{k}(n) \right) F_{m}^{(k)}.
\]
From the above, it follows
\begin{equation}\label{eq 3.22}
		\left| \frac{\gamma^{l}}{4 \sqrt{2}F_{m}^{(k)}} - f_{k}(\varphi) \varphi^{n-1} \right|= \left| \frac{\delta^{l}}{4 \sqrt{2}F_{m}^{(k)}} + e_{k}(n) \right| < 1.5.
\end{equation}
If we divide through by $f_{k}(\varphi) \varphi^{n-1}$, we obtain 
		\begin{equation}\label{eq 3.23}
			|\Lambda_{2}| \leq  \frac{1.5}{f_{k}(\varphi) \varphi^{n-1}}  < \frac{3}{\varphi^{n-1}}, \end{equation}
		where
		\begin{equation}\label{eq 3.24}
			\Lambda_{2} := \gamma^{l}  \varphi^{-(n-1)} \frac{ f^{-1}_{k}(\varphi)}{4 \sqrt{2}F_{m}^{(k)} } - 1.
		\end{equation} 
We can prove that $\Lambda_{2} \neq 0$ by a similar method used to show that $\Lambda_{1} \neq 0$.        
Now, let us apply Theorem \ref{thm3} with
		\[
		(\eta_{1}, a_{1}) := \left(\gamma, l \right), \quad \quad (\eta_{2}, a_{2}) := \left( \varphi, -(n-1) \right),~ \quad {\rm and} ~  \quad (\eta_{3}, a_{3}) := \left(\frac{f^{-1}_{k}(\varphi)}{4 \sqrt{2} F_{m}^{(k)} }, 1 \right).
		\]
		The number field containing $\eta_{1}, \eta_{2}, \eta_{3}$ is  $\mathbb{L} := \mathbb{Q}(\varphi, \sqrt{2})$, which has degree $d_{\mathbb{L}} = [\mathbb{L}:\mathbb{Q}]=2k$. As calculated before, we take 
		\[
		B_{1} = k \log \gamma, \quad \quad B_{2} =  2 \log 2.
		\]
		But we must calculate $h(\eta_{3})$ and then $B_{3}$. Using logarithmic properties and \eqref{eq 3.21}, we deduce that for all $k \geq 3$,
		\begin{align*}
			h(\eta_{3})  & \leq h\left( \frac{ f^{-1}_{k}(\varphi)}{4 \sqrt{2} F_{m}^{(k)}} \right) \\
			& \leq  h(f_{k}(\varphi)) + h(4 \sqrt{2}) + h(F_{m}^{(k)}) \\
			& < \log(k+1) + \log4  + \frac{\log 32}{2} + \log 2 + (m-1) \log \varphi \\
			& < 1.3 \times 10^{14} k^{4} \log^{2} k \log n.  
		\end{align*}
		Thus, we conclude that
		\[
		\max\{2kh(\eta_{3}),|\log \eta_{3}|,0.16\} = 2.6 \times 10^{14} k^{5} \log^{2} k \log n := B_{3}.
		\]
		Finally, by the inequalities \eqref{eq 3.13} and $l < 0.8 n + 0.2 < n$ which hold for all $n \geq 5$, it seen that we can take $D: = n$. Therefore, applying Theorem \ref{thm3} and comparing the resulting inequality with \eqref{eq 3.23}, we obtain
		\[
		n < 2.91 \times 10^{27} k^{8} \log^{3} k \log^{2} n,
		\]
		where we have used the facts $1 + \log 2k < 2.6 \log k$ and $1+ \log n < 1.7 \log n$ for all $k \geq 3$ and $n \geq 5$. Therefore, we obtain
		\begin{equation}\label{eq 3.25}
			\frac{n}{\log^{2} n} < 2.91 \times 10^{27} k^{8} \log^{3} k.
		\end{equation}
		Thus, putting  $S := 2.91 \times 10^{27} k^{8} \log^{3} k$ in \eqref{eq 3.25} and using Lemma \ref{lem 2.6} with the fact $63.23 + 8 \log k + 3 \log (\log k) < 66 \log k$ for all $k \geq 3$, gives 
		\begin{align*}
			n & < 2^2 \left(2.91 \times 10^{27} k^{8} \log^{3}k \right) (\log \left(2.91 \times 10^{27} k^{8} \log^{3}k \right))^2 \\
			& < 2^2 \left(2.91 \times 10^{27} k^{8} \log^{3} k \right) (63.23 + 8 \log k + 3 \log (\log k))^2 \\
			&< 5.1 \times 10^{31} k^{8} \log^{5} k.
		\end{align*}
		This establishes \eqref{eq 3.14} and finishes the proof of Lemma \ref{lem 3.1}.
	\end{proof}

	\subsection{The case when $3 \leq k \leq 450$}
	In this subsection, we will show the following result. 
	\begin{lemma}\label{lem 3.2}
		If $(l, k, n, m)$ is an integer solution   of \eqref{eq 1.4} with $3 \leq k \leq 450$ and $n \geq k+2$, then our variables are bounded as follows:
		\[
		n  \leq 409 \quad \text{and} \quad l \leq 327.
		\]
	\end{lemma}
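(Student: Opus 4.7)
The plan is to apply the Baker--Davenport reduction (Lemma \ref{lem 2.2}) twice, once to the linear form $\Lambda_{1}$ of \eqref{eq 3.18} and once to $\Lambda_{2}$ of \eqref{eq 3.24}, for each fixed integer $k$ in the range $3 \leq k \leq 450$. Lemma \ref{lem 3.1} combined with \eqref{eq 3.13} already gives the crude ceiling $l < 0.8\, n + 0.2 < 4.1 \times 10^{31}\, k^{8}\log^{5}k$, and once we obtain the sharper estimate $n \leq 409$, the bound $l \leq 327$ drops out immediately from \eqref{eq 3.13}. So the real work is to push $n$ down to a three-digit size.

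For the first reduction, set $\Gamma_{1} := l \log \gamma - (n+m-2)\log \varphi + \log\!\bigl(f_{k}^{-2}(\varphi)/(4\sqrt{2})\bigr)$, so that $\Lambda_{1} = e^{\Gamma_{1}}-1$. Since $|\Lambda_{1}|<6/\varphi^{m-1}$ by \eqref{eq 3.19}, provided $m$ is not tiny we have $|\Lambda_{1}|<1/2$, and the second part of Lemma \ref{lem 2.5} yields $|\Gamma_{1}|<12/\varphi^{m-1}$. Dividing by $\log\varphi$ rewrites this as
\[
\left| l\, \tau_{k} - (n+m-2) + \mu_{k} \right| < A\, \varphi^{-(m-1)},
\]
with $\tau_{k} := \log \gamma/\log \varphi$, $\mu_{k} := \log\!\bigl(f_{k}^{-2}(\varphi)/(4\sqrt{2})\bigr)/\log\varphi$, and a modest explicit constant $A$. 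Taking $M$ to be the Lemma \ref{lem 3.1} upper bound on $l$, one computes the continued-fraction expansion of $\tau_{k}$, picks a convergent $p/q$ with $q > 6M$, and checks $\epsilon := \|\mu_{k} q\| - M\, \|\tau_{k} q\| > 0$; Lemma \ref{lem 2.2} then gives a tiny new upper bound on $m$, on the order of a few hundred.

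For the second reduction, we repeat the procedure with $\Lambda_{2}$ using the shrunken $m$. Write $\Gamma_{2} := l\log\gamma - (n-1)\log\varphi + \log\!\bigl(f_{k}^{-1}(\varphi)/(4\sqrt{2}\,F_{m}^{(k)})\bigr)$; by \eqref{eq 3.23} and Lemma \ref{lem 2.5} one has $|\Gamma_{2}|<6/\varphi^{n-1}$. Dividing by $\log \varphi$ and applying Lemma \ref{lem 2.2} separately for each admissible pair $(k,m)$ (with the refined $M$ now given by Lemma \ref{lem 3.1}) produces a uniform upper bound on $n$. Running this across all $3\le k\le 450$ and all surviving $m$, one expects the final sup to land at $n\le 409$, which together with \eqref{eq 3.13} yields $l\le 327$.

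The main obstacle is not conceptual but organisational and computational: the reduction must be carried out for roughly $448$ values of $k$, and within each $k$ for every $m$ not eliminated by the first pass, while guarding against the degenerate situation $\epsilon\le 0$ (in which case $\mu_{k}$ is an approximate rational combination of $1$ and $\tau_{k}$, and Lemma \ref{lem 2.3} must be substituted). One must also verify nonvanishing of $\Lambda_{2}$ by the Galois-conjugation argument already used for $\Lambda_{1}$, and confirm that the hypothesis $|\Lambda_{i}|<1/2$ required to invoke Lemma \ref{lem 2.5} holds throughout the relevant range; should it fail, the corresponding very small values of $m$ (or $n$) are checked directly against \eqref{eq 1.4}.
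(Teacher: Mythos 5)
Your proposal follows exactly the paper's argument: a first Baker--Davenport reduction applied to $\Gamma_{1}=\log(\Lambda_{1}+1)$ with $\tau=\log\gamma/\log\varphi$ and $M_{k}$ taken from Lemma \ref{lem 3.1} to force $m$ below a few hundred (the paper gets $m\le 443$), followed by a second reduction applied to $\Gamma_{2}$ for each surviving pair $(k,m)$ to obtain $n\le 409$, with $l\le 327$ then following from \eqref{eq 3.13}. The only differences are immaterial choices of the constant $a$ in Lemma \ref{lem 2.5} (the paper takes $a=0.64$ under the assumption $m\ge 5$, and $a=0.98$ for $n\ge 3$), so your approach is correct and essentially identical to the paper's.
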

	\begin{proof}
		To apply Lemma \ref{lem 2.2}, we define
		\begin{equation}\label{eq 3.26}
			\Gamma_{1} := \log (\Lambda_{1} + 1) = l \log \gamma -(n+m-2) \log \varphi + \log \left( \frac{f_{k}^{-2}(\varphi)}{4 \sqrt{2}} \right).   
		\end{equation}
		Suppose that $m \geq 5$, then by \eqref{eq 3.19}, we have $|\Lambda_{1}| < 0.64$. Choosing $a := 0.64$, we obtain the inequality
		\[
		|\Gamma_{1}| < \frac{- \log(1 - 0.64)}{0.64} \cdot \frac{6}{\varphi^{m-1}} < \frac{9.6}{\varphi^{m-1}}
		\]
		by Lemma \ref{lem 2.5}. Thus, it follows that
		\begin{equation*}
			\left|l \log \gamma -(n+m-2) \log \varphi + \log \left( \frac{f_{k}^{-2}(\varphi)}{4 \sqrt{2}} \right) \right| <  \frac{9.6}{\varphi^{m-1}}. 
		\end{equation*}
		Dividing the above inequality by $\log \varphi$, we get 
		\begin{equation}\label{eq 3.27}
			\left| l  \left( \frac{\log \gamma}{\log \varphi} \right) - (n+m) + 2 +  \frac{\log \left(\frac{ f_{k}^{-2}(\varphi)}{4 \sqrt{2}}\right)}{\log \varphi} \right| < \frac{17.2}{\varphi^{m-1}}.   
		\end{equation}
		In order to apply Lemma \ref{lem 2.2}, we set
		\[ \tau =  \frac{\log \gamma}{\log \varphi}, \quad \mu = 2 +  \frac{\log \left( \frac{f_{k}^{-2}(\varphi)}{4 \sqrt{2}}\right)}{\log \varphi}, \quad A:= 17.2 \quad \text{and} \quad B:= \varphi.
		\]
		We have $\tau \notin \mathbb{Q}$. Indeed, if we assume there exist coprime integers $a$ and $b$ such that $\tau = a/b$, then we get that $\varphi^{a} = \gamma^{b}$. Let $\sigma \in Gal(\mathbb{K} / \mathbb{Q})$, the Galois group of the extension $\mathbb{K} / \mathbb{Q}$, such that $\sigma(\varphi)=\varphi_{i}$, for some $i \in \{3, \ldots, k\}$. Applying this to the above relation and taking absolute values we get $1 < 2^{a} = |\varphi_{i}| < 1$, which is a contradiction. Moreover, we note that $ M_{k}:= \lfloor 5.1 \times 10^{31} k^{8} \log^{5} k   \rfloor$, which is an upper bound of $l$ from Lemma \ref{lem 3.1}. For each $k \in [3, 450]$, we find a good approximation of $\tau$ and a convergent $p_{l}/q_{l}$ of the continued fraction of $\tau$ such that $ q = q(k) > 6M_{k}$ is a denominator of a convergent of the continued fraction of $\tau$ with $ \epsilon = \epsilon(k):= \| \mu q\| - M_{k}\|\tau q\| > 0$. A computer search with \textit{Mathematica} revealed that if $k \in [3,450]$, then the maximum value of $\log(Aq/\epsilon)/ \log B$ $442.771$, which is according to Lemma \ref{lem 2.2}, is an upper bound on $m-1$.  
		
		Now, we fix $1 \leq m \leq 443$ and we consider
		\begin{equation}\label{eq 3.28}
			\Gamma_{2} := \log (\Lambda_{2} + 1) = l \log \gamma -(n-1) \log \varphi + \log \left( \frac{ f_{k}^{-1}(\varphi)}{4 \sqrt{2} F_{m}^{(k)}}  \right).   
		\end{equation}
		It is easy to see that $|\Lambda_{2}| < 0.98$ for $n \geq 3$. As a result, using inequality \eqref{eq 3.23} and Lemma \ref{lem 2.5}, we obtain 
		\begin{equation}\label{eq 3.29}
			|\Gamma_{2}| < \frac{- \log(1 - 0.98)}{0.98} \cdot \frac{3}{\varphi^{n-1}} < \frac{12}{\varphi^{n-1}}.    
		\end{equation}
		Replacing \eqref{eq 3.29} by \eqref{eq 3.28} and dividing through by $\log \varphi$, we obtain
		\begin{equation}\label{eq 3.30}
			\left| l \left( \frac{\log \gamma}{\log \varphi} \right) - (n-1) +   \frac{\log \left( \frac{ f_{k}^{-1}(\varphi)}{4 \sqrt{2} F_{m}^{(k)}}\right)}{\log \varphi} \right| <  \frac{21.45}{\varphi^{n-1}}. 
		\end{equation}
		To apply Lemma \ref{lem 2.2} to \eqref{eq 3.30}, this time for $1 \leq m \leq 443$, we set
		\[ \tau =  \frac{\log \gamma}{\log \varphi}, \quad \mu = 1 + \frac{\log \left( \frac{f_{k}^{-1}(\varphi)}{4 \sqrt{2} F_{m}^{(k)}}\right)}{\log \varphi}, \quad A:= 21.45 \quad \text{and} \quad B:= \varphi.
		\]
		Again, for $(k, m) \in [3, 450] \times [1, 443]$, we find a good
		approximation of $\tau$ and a convergent $p_{l}/q_{l}$ of the continued fraction of $\tau$ such that $ \epsilon = \epsilon(k):= \| \mu q\| - M_{k}\|\tau q\| > 0$, where $M_{k}:= \lfloor 5.1 \times 10^{31} k^{8} \log^{5} k   \rfloor$ which is an upper bound of $l$ from Lemma \ref{lem 3.1}. After doing this, we use Lemma \ref{lem 2.2} on inequality \eqref{eq 3.30}. Again, a program in \textit{Mathematica} revealed that the maximum value of $\log(Aq/\epsilon)/ \log B$ over all $(k, m) \in [3, 450] \times [1, 443]$ is $408.668$, which according to Lemma \ref{lem 2.2}, is an upper bound of $n-1$.
		
		Hence, we deduce that the possible solutions $(l, k, n, m)$ of \eqref{eq 1.4} for which
		$k \in [3, 450]$ satisfy $k+2 \leq m \leq n \leq 409$. Therefore, we use inequality \eqref{eq 3.13} to obtain $l \leq 327$.
	\end{proof}
	
	\subsection{The case when $k > 450$}
	In this subsection, our primary goal is to show the following lemmas that aim to prove that there does not exist any solution for the case when $k > 450$ and $n \geq k+2$.
	
	\begin{lemma}\label{lem 3.3}
		If $(l, k, m, n)$ is a solution of the Diophantine equation \eqref{eq 1.4} with $k > 450$ and $n \geq k+2$, then $k$ and $n$ are bounded as
		\[
		k < 5.7 \times 10^{16} \quad \text{and} \quad n < 4.86 \times 10^{173}.
		\]
	\end{lemma}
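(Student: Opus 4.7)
The strategy in the regime $k > 450$ is different from that of Lemma \ref{lem 3.1}: instead of working with $\varphi^{n-1}$, we exploit the exponentially sharp approximation $F_n^{(k)} = 2^{n-2}(1+\xi)$ with $|\xi|<2^{-k/2}$ from \eqref{eq 2.12}, which is valid because $n \geq k+2$. This replaces the $k$-dependent linear form used before by one involving only $\log\gamma$ and $\log 2$ (and the rational constant $1/(4\sqrt{2})$), whose logarithmic heights are absolute constants \emph{independent of $k$}. Matveev's theorem then delivers a lower bound on the new form whose $k$-dependence enters only through the factor $1 + \log D$, which is precisely what is needed to bound $k$.

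The edge case $m = 1$ is disposed of first: the equation reduces to $B_l = F_n^{(k)}$, and Rihane's result \cite{Rihane3} asserts that the only balancing numbers appearing in a $k$-Fibonacci sequence are $1$ and $6930$. Both values are incompatible with $F_n^{(k)} \geq 2^k - 1$ for $n \geq k+2 > 452$, so we may assume $m \geq 2$. Using \eqref{eq 2.12} for $F_n^{(k)}$ and either \eqref{eq 2.12} or the exact identity $F_m^{(k)} = 2^{m-2}$ (when $2 \leq m \leq k+1$), one obtains $F_n^{(k)} F_m^{(k)} = 2^{n+m-4}(1+\xi)$ with $|\xi| < 3 \cdot 2^{-k/2}$. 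Substituting this and Binet's formula \eqref{eq 1.1} into \eqref{eq 1.4} and dividing by $2^{n+m-4}$ then yields
\[
|\Lambda_3| := \left| \gamma^l \cdot 2^{-(n+m-4)} \cdot (4\sqrt{2})^{-1} - 1 \right| < 4 \cdot 2^{-k/2},
\]
the $\delta^l/(4\sqrt{2})$ contribution being negligible. Nonvanishing $\Lambda_3\neq 0$ follows from applying the conjugation $\sqrt{2} \mapsto -\sqrt{2}$ (which fixes $2$ and sends $\gamma\to\delta$) and comparing absolute values, since $|\delta|^l < 1 < 4\sqrt{2} \cdot 2^{n+m-4}$.

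Matveev's theorem is then applied to $\Lambda_3$ with $\eta_1 = \gamma$, $\eta_2 = 2$, $\eta_3 = 1/(4\sqrt{2})$ over $\mathbb{L} = \mathbb{Q}(\sqrt{2})$, giving $d_\mathbb{L} = 2$, $B_1 = \log \gamma$, $B_2 = 2\log 2$, $B_3 = 5\log 2$, and $D = 2n$ from \eqref{eq 3.13}. This produces $-\log|\Lambda_3| < C_0(1+\log (2n))$ for an explicit absolute constant $C_0$; comparing with the upper bound $|\Lambda_3| < 4\cdot 2^{-k/2}$ yields $(k/2)\log 2 < C_0(1+\log(2n)) + \log 4$, hence $k < C_1 \log n$ for an explicit $C_1$. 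Feeding the bound $n < 5.1 \times 10^{31} k^8 \log^5 k$ from Lemma \ref{lem 3.1} into this inequality and simplifying the $\log n$ term into a multiple of $\log k$ produces an estimate of the form $k/\log k < C_2$, to which Lemma \ref{lem 2.6} in the linear case ($m=1$) applies and gives the claimed $k < 5.7 \times 10^{16}$. Re-substituting this bound into Lemma \ref{lem 3.1} then yields $n < 4.86 \times 10^{173}$. The main difficulty is purely one of bookkeeping: one must propagate Matveev's explicit constants, the factor $1+\log(2k)$ hidden inside $1+\log d_\mathbb{L}$ considerations, and the final application of Lemma \ref{lem 2.6} carefully enough that the numerical bounds come out exactly as stated.
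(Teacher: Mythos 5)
Your proposal follows essentially the same route as the paper: both exploit $n<2^{k/2}$ to invoke \eqref{eq 2.12}, reduce \eqref{eq 1.4} to a three-term linear form in $\gamma$, $2$ and $\sqrt{2}$ over $\mathbb{Q}(\sqrt{2})$ with $D=2n$, apply Matveev, and then feed the resulting $k<C_1\log n$ back through Lemma \ref{lem 3.1} and Lemma \ref{lem 2.6}; your explicit treatment of the cases $m=1$ (via Rihane's theorem) and $2\le m\le k+1$ (via $F_m^{(k)}=2^{m-2}$) is actually more careful than the paper, which silently assumes $m\ge k+2$. The one quantitative wrinkle is your choice $\eta_3=1/(4\sqrt{2})$, which forces $B_3=d_{\mathbb{L}}h(\eta_3)=5\log 2$ rather than the paper's $B_3=\log 2$ (obtained by absorbing the factor $4$ into the exponent of $\eta_2=2$ and keeping only $\eta_3=\sqrt{2}$), so your constants come out roughly five times larger and the computation lands near $k<3\times 10^{17}$ instead of the stated $5.7\times 10^{16}$ --- harmless for the method, but you would need to regroup the algebraic numbers as the paper does to reproduce the exact bounds in the lemma.
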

	\begin{proof}
		For $k > 450$, the following inequalities hold
		\[
		n < 5.1 \times 10^{31} k^{8} \log^{5} k < 2^{k/2}.
		\]
		Hence, from \eqref{eq 2.12}, we have
		\begin{equation}\label{eq 3.31}
			F_{n}^{(k)}F_{m}^{(k)} = 2^{m+n-4} (1 + \xi)^2
		\end{equation}
		Inserting \eqref{eq 3.31} in \eqref{eq 1.4}, we obtain
		\begin{align*}
			\frac{\gamma^{l}}{4 \sqrt{2}} -  2^{m+n-4} = 2^{m+n-4} \left( 2 \xi + \xi^{2} \right) + \frac{\delta^{l}}{4 \sqrt{2}}.
		\end{align*}
		The above inequality and the fact $\left( 2 \xi + \xi^{2} \right) < 3/2^{k/2}$ together with $n \geq m \geq k+2$ yeilds that
		\begin{equation}\label{eq 3.32}
			\left| \gamma^{l} 2^{-(m+n-2)} (\sqrt{2})^{-1} - 1  \right| < \frac{3}{2^{k/2}} + \frac{1}{2^{2k}} < \frac{4}{2^{k/2}},    
		\end{equation}
		In order to use Theorem \ref{thm3}, we take
		\[
		(\eta_{1}, a_{1}) := (\gamma, l), \quad \quad (\eta_{2}, a_{2}) := (2, -(m+n-2)), \quad \text{and}  \quad (\eta_{3}, a_{3}) := (\sqrt{2}, -1).
		\]
		The number field containing $\eta_{1}, \eta_{2}, \eta_{3}$ is  $\mathbb{L} := \mathbb{Q}(\sqrt{2})$, which has degree $d_{\mathbb{L}} = [\mathbb{L}:\mathbb{Q}]=2$. Here 
		\begin{equation}\label{eq 3.33}
			\Lambda_{3} := \gamma^{l} 2^{-(m+n-2)} (\sqrt{2})^{-1} - 1,
		\end{equation}
		is nonzero. If  $\Lambda_{3} = 0$, then $\gamma^{2l} = 2^{2m+2n-3} \in \mathbb{Q}$ which is impossible. Therefore, $\Lambda_{3} \neq 0$. Moreover, since 
		\[
		h(\eta_{1}) = \frac{\log \gamma}{2}, \quad \quad h(\eta_{2}) = \log 2, \quad \quad  h(\eta_{3}) = \frac{\log 2}{2}.
		\]
		Therefore, we may take
		\[
		B_{1} := \log \gamma, \quad B_{2} := 2 \log 2 \quad \text{and} \quad B_{3} := \log 2.
		\]
		Finally, the fact that $m \leq n$ and the inequality \eqref{eq 3.13} imply that we can take $D:= 2 n$. Thus, taking into account inequality \eqref{eq 3.32} and applying Theorem \ref{thm3}, we obtain  
		\begin{equation}\label{eq 3.34}
			k < 1.1 \times 10^{13} \log n.    
		\end{equation}
		where we have used the fact that $1 + \log 2n < 2.1 \log n$ for $n \geq 5$. On the other hand, from  Lemma \ref{lem 3.1} we get 
		\[
		\log n < \log (5.1 \times 10^{31} k^{8} \log^{5} k ) < 73.01 + 8 \log k + 5 \log (\log k) < 75 \log k
		\]
		for $k \geq 3$. So, from \eqref{eq 3.34} we obtain
		\[
		k < 8.26 \times 10^{14} \log k.
		\]
		Further using Lemma \ref{lem 2.6} and Lemma \ref{lem 3.1} the above inequality leads to 
		\[
		k < 5.7 \times 10^{16} \quad \text{and} \quad n < 4.86 \times 10^{173}.
		\]
		This completes the proof Lemma \ref{lem 3.3}.
	\end{proof}
	The bounds obtained for $n$ and $k$ from the above lemma are very large. Now, our next goal is to reduce this upper bound to a reasonable range for which we will prove the following lemma.
	
	\begin{lemma}\label{lem 3.4}
		The equation \eqref{eq 1.4} has no solutions for $k > 450$ and $n \geq k+2$.
	\end{lemma}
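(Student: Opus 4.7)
The plan is to mirror the reduction argument used in the proof of Lemma \ref{lem 3.2}, but now starting from the linear form $\Lambda_{3}$ produced in Lemma \ref{lem 3.3}, so as to drive the bound on $k$ below $450$ and contradict the standing hypothesis $k > 450$.

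First I would pass from the multiplicative inequality \eqref{eq 3.32} to an additive one. Since $4/2^{k/2} < 1/2$ whenever $k > 450$, Lemma \ref{lem 2.5} with $a = 1/2$ yields
\[
|\Gamma_{3}| := \bigl| l \log \gamma - (m+n-2) \log 2 - \tfrac{1}{2}\log 2 \bigr| < \frac{c_{1}}{2^{k/2}},
\]
for an explicit constant $c_{1}$. Dividing through by $\log 2$ puts this in the shape required by Lemma \ref{lem 2.2},
\[
\bigl| l\, \tau - (m+n-2) + \mu \bigr| < \frac{c_{1}/\log 2}{(\sqrt{2})^{\,k}},
\]
with $\tau := (\log \gamma)/\log 2$ and $\mu := -\tfrac{1}{2}$. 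The irrationality of $\tau$ is immediate: an identity $\gamma^{b} = 2^{a}$ with $b > 0$ would, after conjugation in $\mathbb{Q}(\sqrt{2})$ and multiplication, give $1 = |\gamma \delta|^{b} = 2^{2a}$, forcing $a = b = 0$.

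From Lemma \ref{lem 3.3} together with \eqref{eq 3.13}, one has $l < n < 4.86 \times 10^{173}$, so I would set $M := 5 \times 10^{173}$ and search for the first convergent $p_{j}/q_{j}$ of $\tau$ with $q_{j} > 6M$, odd (so that $\|\mu q_{j}\| = \tfrac{1}{2}$), and $\epsilon := \|\mu q_{j}\| - M\,\|\tau q_{j}\| > 0$. Lemma \ref{lem 2.2} then produces an explicit upper bound $k < K_{1}$, with $K_{1}$ expected to be of moderate size (at most a few thousand). Having reduced $k$ to this short interval, I would re-run the two reductions of Lemma \ref{lem 3.2} on $\Lambda_{1}$ and $\Lambda_{2}$, now looping over $450 < k \leq K_{1}$ and the correspondingly reduced range of $m$. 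Lemma \ref{lem 3.1} gives a very small bound on $n$ in this regime, and the constraint $n \geq k+2 > 452$ should exclude all remaining quadruples; in the unlikely event that a second pass is needed, one iterates with the new, drastically smaller $M$.

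The main obstacle is practical rather than theoretical: I must be confident that (a) a convergent satisfying all three conditions ($q_{j} > 6M$, $q_{j}$ odd, $\epsilon > 0$) appears within a computationally tractable distance, and (b) the resulting $K_{1}$ is small enough that the second-phase sweep over $k$ is feasible. If some intermediate $q_{j}$ fails the parity or positivity test, one simply advances to the next convergent; equidistribution arguments together with the estimate $\|\tau q_{j}\| < 1/q_{j+1}$ ensure that such a $q_{j}$ eventually exists. Note that Lemma \ref{lem 2.3} is not required here, since $\mu = -\tfrac{1}{2}$ is rational and therefore cannot be a $\mathbb{Z}$-linear combination of $1$ and the irrational $\tau$.
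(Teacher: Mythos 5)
Your proposal is correct and follows essentially the same route as the paper: starting from $\Lambda_{3}$ and \eqref{eq 3.32}, passing to the additive form via Lemma \ref{lem 2.5}, dividing by $\log 2$, and applying Lemma \ref{lem 2.2} with $\tau=(\log\gamma)/\log 2$ and $\mu=-\tfrac12$ to force $k$ below a moderate bound (the paper gets $k\leq 1180$), after which the paper simply iterates the same reduction with the smaller $M$ to reach $k<420<450$ --- your stated fallback --- rather than re-running the $\Lambda_{1},\Lambda_{2}$ reductions over $(450,K_{1}]$. Your explicit insistence that the chosen denominator $q_{j}$ be odd (so that $\|\mu q_{j}\|=\tfrac12$ and $\epsilon>0$ is attainable) is a detail the paper leaves implicit, and is a correct and welcome precaution.
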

	\begin{proof}
		Here, we attempt to reduce the upper bound on $n$ and $k$. To do so, we first consider
		\[
		\Gamma_{3} :=  -(m+n-2) \log 2 + l \log \gamma - \log \sqrt{2}.
		\]
		Since $k > 450$, then from \eqref{eq 3.32} we have $|\Lambda_{3}| < 0.01$. Choosing $a: = 0.01$, we obtain the inequality
		\begin{equation}\label{eq 3.35}
			|\Gamma_{3}| = |\log (\Lambda_{3} + 1)| < \frac{- \log(1 - 0.01)}{0.01} \cdot \frac{4}{2^{k/2}} < \frac{4.1}{2^{k/2}}.   
		\end{equation}
		by Lemma \ref{lem 2.5}. Thus, it follows that
		\begin{equation*}
			\left| -(m+n-2) \log 2 + l \log \gamma -  \log \sqrt{2}  \right| <  \frac{4.1}{2^{k/2}}.
		\end{equation*}
		Dividing the above inequality by $\log 2$, we get 
		\begin{equation}\label{eq 3.36}
			\left|l \left( \frac{\log \gamma}{\log 2} \right) - (m+n-2) -  \frac{1}{2} \right| < \frac{5.92}{2^{k/2}}. 
		\end{equation}
		Applying Lemma \ref{lem 2.2} with parameters
		\[ \tau :=  \frac{\log \gamma}{\log 2},  \quad \mu := -\frac{1}{2} , \quad A:= 5.92 \quad \text{and} \quad B:= 2.
		\]
		The fact $m+n-2 < 2n$ and $n := 4.86 \times 10^{173}$ by Lemma \ref{lem 2.2} imply that we can take $M := 9.72 \times 10^{173}$. We found that $q_{327}$, the denominator of the $327th$ convergent of $\tau$ exceeds $6M$. Furthermore, a quick computation with \textit{Mathematica} gives us the value
		\[
		\frac{\log(Aq_{327}/\epsilon)}{\log B}
		\]
		is less than $590$. So, if the inequality \eqref{eq 3.36} has a solution, then
		\[
		\frac{k}{2} < \frac{\log(Aq_{327}/\epsilon)}{\log B} < 590,
		\]
		which implies that $k \leq 1180$. With the above upper bound for $k$ and by Lemma \ref{lem 3.1}, we have
		\[
		n < 3.4 \times 10^{60}.
		\]
		We now proceed as we did before, but with $M := 6.8 \times 10^{60}$, we obtain that $k < 420$, which contradicts our assumption $k > 450$. This completes the proof of Lemma \ref{lem 3.4}.   
	\end{proof}
	
	\subsection{The final step}
	According to Lemma \ref{lem 3.2} and \ref{lem 3.4}, if $(l, k, n, m)$ is a solution of the Diophantine equation \eqref{eq 1.4} then
	\[
	3 \leq k \leq 450, \quad k+2 \leq n \leq 409, \quad 1 \leq m \leq n \quad \text{and} \quad 1 \leq l \leq 327.
	\]
	Using \textit{Mathematica}, we checked that all the solutions of the Diophantine equation \eqref{eq 1.4} are those listed in the statement of Theorem \ref{thm1}. This completes the proof of Theorem \ref{thm1}.
	
	\section{Proof of Theorem \ref{thm2}}
	We begin our analysis of \eqref{eq 1.5} for $2 \leq n \leq k+1$. In this case, it is known that $F_{n}^{(k)} = 2^{n-2}$, thus the equation \eqref{eq 1.5} transform into
	\[
	C_{l} = 2^{m+n-4},
	\]
	which has no solution in the range $2 \leq n \leq k+1$. From now on, we consider that $n \geq k+2$ and $k \geq 2$.
	\subsection{An upper bound for l versus n}
	The inequalities \eqref{eq 1.2} and \eqref{eq 2.10} together with the equation \eqref{eq 1.5}, imply
	\[
	\frac{\gamma^{l}}{2} \leq C_{l} =  F_{n}^{(k)} F_{m}^{(k)} \leq \varphi^{n+m-2} < \varphi^{2n-2},
	\]
	so we deduce that
	\[
	l \leq \left( \frac{\log 2}{\log \gamma}  \right) + (2 n-2) \left( \frac{\log \varphi}{\log \gamma}  \right).
	\]
	Using the fact that $2(1-2^{-k}) < \varphi(k) < 2$ for all  $k \geq 2$, we obtain
	\begin{equation}\label{eq 4.37}
		l < 0.8 n -0.4.    
	\end{equation}
	
	\subsection{Bounding $n$ in terms of $k$}
	In this step, we will give an inequality for $n$ in terms of $k$. More precisely, we will show the following lemma.
	\begin{lemma}\label{lem 4.1}
		If $(l, k, n, m)$ is an integer solution of \eqref{eq 1.5} with $k \geq 2$ and $n \geq k+2$, then the inequalities 
		\begin{equation}\label{eq 4.38}
			n < 3.28 \times 10^{32} k^{8} \log^{5} k
		\end{equation}
		hold.
	\end{lemma}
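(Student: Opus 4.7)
The plan is to mirror the argument used for Lemma \ref{lem 3.1}, since the Binet formula $C_l = (\gamma^l+\delta^l)/2$ differs from that of $B_l$ only in a sign and a multiplicative constant. Thus I will set up two distinct linear forms in logarithms, apply Matveev's theorem (Theorem \ref{thm3}) to each, and then combine the resulting estimates via Lemma \ref{lem 2.6}. The upshot is that every step of Section~3 goes through after replacing $4\sqrt 2$ by $2$ (and allowing $k\geq 2$), with only the numerical constants changing.

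First, using \eqref{eq 1.1} and \eqref{eq 2.9} I would rewrite \eqref{eq 1.5} as
\[
\frac{\gamma^{l}+\delta^{l}}{2}=\bigl(f_{k}(\varphi)\varphi^{n-1}+e_{k}(n)\bigr)\bigl(f_{k}(\varphi)\varphi^{m-1}+e_{k}(m)\bigr),
\]
isolate $\gamma^{l}/2-f_{k}^{2}(\varphi)\varphi^{n+m-2}$, and divide by $f_{k}^{2}(\varphi)\varphi^{n+m-2}$. Using $f_{k}(\varphi)>1/2$ (Lemma \ref{lem 2.4}) this yields
\[
|\Lambda_{1}|:=\left|\gamma^{l}\varphi^{-(n+m-2)}\frac{f_{k}^{-2}(\varphi)}{2}-1\right|<\frac{c_{1}}{\varphi^{m-1}}
\]
for an absolute constant $c_{1}$. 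I would then verify $\Lambda_{1}\neq 0$ by the same Galois conjugation argument as in Section~3 (applying an automorphism sending $\varphi$ to a conjugate inside the unit circle leads to a contradiction because the right side would exceed $1$ while the left does not). Applying Theorem~\ref{thm3} with $\eta_{1}=\gamma$, $\eta_{2}=\varphi$, $\eta_{3}=f_{k}^{-2}(\varphi)/2$ over $\mathbb L=\mathbb Q(\varphi,\sqrt 2)$ of degree $2k$, using $h(\eta_{3})<2(\log(k+1)+\log 4)+\tfrac12\log 4<6.7\log k$ and the upper bound $l<0.8n+0.2<n$ from \eqref{eq 4.37}, I expect an inequality of the form
\[
(m-1)\log\varphi<C_{1}\,k^{4}\log^{2}k\,\log n
\]
with a computable constant $C_{1}$ of order $10^{14}$.

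Next, I would express \eqref{eq 1.5} as $(\gamma^{l}+\delta^{l})/2=\bigl(f_{k}(\varphi)\varphi^{n-1}+e_{k}(n)\bigr)F_{m}^{(k)}$, isolate and rescale, obtaining
\[
|\Lambda_{2}|:=\left|\gamma^{l}\varphi^{-(n-1)}\frac{f_{k}^{-1}(\varphi)}{2F_{m}^{(k)}}-1\right|<\frac{c_{2}}{\varphi^{n-1}}.
\]
Again $\Lambda_{2}\neq 0$ by Galois conjugation. I would apply Theorem~\ref{thm3} a second time to the triple $(\gamma,\varphi,f_{k}^{-1}(\varphi)/(2F_{m}^{(k)}))$, now using the bound on $(m-1)\log\varphi$ from the previous step together with $h(F_{m}^{(k)})\leq (m-1)\log\varphi+\log 2$ to estimate $h(\eta_{3})$. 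This should produce an inequality of the shape
\[
\frac{n}{\log^{2}n}<C_{2}\,k^{8}\log^{3}k
\]
with $C_{2}$ a fully explicit constant.

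Finally, Lemma \ref{lem 2.6} with $m=2$ and $S=C_{2}\,k^{8}\log^{3}k$ converts this into the desired bound $n<3.28\times10^{32}k^{8}\log^{5}k$, after absorbing the routine logarithmic estimates $\log S<66\log k$ (valid for $k\geq 2$). The main obstacle I expect is purely bookkeeping: the constants must be tracked carefully enough to pin down the coefficient $3.28\times 10^{32}$, and the estimates $1+\log 2k<c\log k$ that worked for $k\geq 3$ in Section~3 must be re-examined (or mildly weakened) to cover $k=2$ as well. No new conceptual ingredient is needed beyond what appears in the proof of Lemma~\ref{lem 3.1}.
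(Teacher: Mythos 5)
Your proposal follows essentially the same route as the paper: the same two linear forms $\Lambda_4$ and $\Lambda_5$ (with $4\sqrt{2}$ replaced by $2$), the same Galois-conjugation nonvanishing argument, two applications of Matveev's theorem, and a final appeal to Lemma \ref{lem 2.6}. The one point you rightly flagged as needing care is real: the bound $2(\log(k+1)+\log 4)+\log 2<6.7\log k$ fails at $k=2$, and the paper replaces it by $8.2\log k$ (and similarly weakens $1+\log 2k<2.6\log k$ to $3.5\log k$), which is exactly the routine adjustment you anticipated.
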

	\begin{proof}
		We use identities \eqref{eq 1.1} and \eqref{eq 2.9} to express \eqref{eq 1.5} into the form
		\begin{equation}\label{eq 4.39}
			\frac{\gamma^{l} + \delta^{l}}{2} = 
			\left( f_{k}(\varphi) \varphi^{n-1} +e_{k}(n) \right) \left(f_{k}(\varphi) \varphi^{m-1} +e_{k}(m) \right),
		\end{equation}
		which we rewrite as
		\[
		\frac{\gamma^{l}}{2} - f_{k}^{2}(\varphi) \varphi^{n+m-2}  = f_{k}(\varphi) \varphi^{n-1} e_{k}(m) + f_{k}(\varphi) \varphi^{m-1} e_{k}(n) + e_{k}(n) e_{k}(m) + \frac{\delta^{l}}{2}. 
		\]
		Therefore, we get
		\begin{equation}\label{eq 4.40}
			\left| \frac{\gamma^{l}}{2} - f_{k}^{2}(\varphi) \varphi^{n+m-2} \right| < \frac{f_{k}(\varphi)}{2} \varphi^{n-1} + \frac{f_{k}(\varphi)}{2} \varphi^{m-1} + \frac{1}{4} + \frac{|\delta|^{l}}{2}.
		\end{equation}
		Dividing both sides by $f_{k}^{2}(\varphi) \varphi^{n+m-2}$ and using the fact $f_{k}(\varphi) > 1/2$, we obtain
		\begin{equation}\label{eq 4.41}
			|\Lambda_{4}| <  \frac{1}{\varphi^{m-1}} + \frac{1}{\varphi^{n-1}} + \frac{3}{\varphi^{n+m-2}} < \frac{5}{\varphi^{m-1}},
		\end{equation}
		where
		\begin{equation}\label{eq 4.42}
			\Lambda_{4} :=  \gamma^{l} \varphi^{-(n+m-2)} \frac{ f_{k}^{-2}(\varphi)}{2} -1.
		\end{equation}
In a similar manner used to show that $\Lambda_{1} \neq 0$, one can verify that $\Lambda_{4} \neq 0$. To apply Theorem \ref{thm3}, we set
		\[
		\eta_{1} := \gamma, \quad \eta_{2} := \varphi , \quad \eta_{3} := \frac{f_{k}^{-2}(\varphi)}{2},
		\]
		and
		\[ 
		a_{1}:= l, \quad a_{2}:= -(n+m-2), \quad a_{3}:= 1.
		\]
		The number field containing $\eta_{1}, \eta_{2}, \eta_{3}$ is  $\mathbb{L} := \mathbb{Q}(\varphi, \sqrt{2})$, which has degree $d_{\mathbb{L}} = [\mathbb{L}:\mathbb{Q}]=2k$. As before, we have
		\[
		h(\eta_{1}) = \frac{\log \gamma}{2} \quad \text{and} \quad h(\eta_{2}) = \frac{\log \varphi}{k} < \frac{\log 2}{k}.
		\]
		Moreover
		\[
		\max\{2kh(\eta_{1}),|\log \eta_{1}|,0.16\} = k \log \gamma := B_{1}
		\]
		and 
		\[
		\max\{2kh(\eta_{2}),|\log \eta_{2}|,0.16\} = 2 \log 2 := B_{2}.
		\]
		Using the properties of logarithmic height and the estimate \eqref{eq 2.11}, we obtain
		\begin{align*}
			h(\eta_{3})  &  \leq 2 h(f_{k}(\varphi)) + h(2) \\
			& < 2 \left( \log(k+1) + \log 4 \right) +  \log 2 \\
			& < 8.2 \log k  
		\end{align*}
		for $k \geq 2$. So, we can take
		\[
		\max\{2kh(\eta_{3}),|\log \eta_{3}|,0.16\} = 16.4 k \log k := B_{3}.
		\]
		Finally, the fact that $m \leq n$ and the inequality \eqref{eq 4.37} implies that we can take $D:= 2 n$. Therefore, due to Theorem \ref{thm3} one obtains 
		\begin{equation}\label{eq 4.43}
			\log |\Lambda_{4}| >   -1.432 \times 10^{11} (2k)^{2} (1+ \log 2k) (1 +\log 2n) (k \log \gamma) (2 \log 2) (16.4 k \log k).   
		\end{equation}
		Comparing the lower bound \eqref{eq 4.43} and upper bound \eqref{eq 4.41} of   $|\Lambda_{4}|$ entails us
		\begin{equation}\label{eq 4.44}
			(m-1)  \log \varphi < 1.86 \times 10^{14} k^{4} \log^{2} k \log n,    
		\end{equation}
		where we have used the facts $1 + \log 2k < 3.5 \log k$ for all $k \geq 2$ and $1+ \log 2n < 2.3 \log n$ for all $n \geq 4$. 
		
		To apply Theorem \ref{thm3} the second time, we go back to \eqref{eq 1.5} and we rewrite it as
		\[
		\frac{\gamma^{l} + \delta^{l}}{2} = 
		\left( f_{k}(\varphi) \varphi^{n-1} +e_{k}(n) \right) F_{m}^{(k)},
		\]
		which yields
		\begin{equation}\label{eq 4.45}
			\left| \frac{\gamma^{l}}{2 F_{m}^{(k)}} - f_{k}(\varphi) \varphi^{n-1} \right|= \left| \frac{\delta^{l}}{2 F_{m}^{(k)}} + e_{k}(n) \right|  \leq 1.
		\end{equation}
		If we divide through by $f_{k}(\varphi) \varphi^{n-1}$, we obtain 
		\begin{equation}\label{eq 4.46}
			|\Lambda_{5}| \leq  \frac{1}{f_{k}(\varphi) \varphi^{n-1}}  < \frac{2}{\varphi^{n-1}}, 
		\end{equation}
		where
		\begin{equation}\label{eq 4.47}
			\Lambda_{5} := \gamma^{l}  \varphi^{-(n-1)} \frac{ f^{-1}_{k}(\varphi)}{2 F_{m}^{(k)} } - 1.
		\end{equation}
		We have $\Lambda_{5} \neq 0$.  Now, we apply Theorem \ref{thm3} to $\Lambda_{5}$ given by \eqref{eq 4.47} by fixing
		\[
		(\eta_{1}, a_{1}) := \left(\gamma, l \right), \quad \quad (\eta_{2}, a_{2}) := \left( \varphi, -(n-1) \right),~ \quad {\rm and} ~  \quad (\eta_{3}, a_{3}) := \left(\frac{f^{-1}_{k}(\varphi)}{2 F_{m}^{(k)} }, 1 \right).
		\]
		It is obvious that $\mathbb{L} := \mathbb{Q}(\varphi, \sqrt{2})$ contains $\eta_{1}, \eta_{2}, \eta_{3}$ and $d_{\mathbb{L}} = 2k$. As calculated before, we can take $B_{1} = k \log \gamma$ and  $B_{2} = 2 \log 2$. We need to compute $B_{3}$. Using the estimates \eqref{eq 2.10}, \eqref{eq 4.44} and properties of logarithmic height, we have for all $k \geq 2$,
		\begin{align*}
			h(\eta_{3})  & \leq h\left( \frac{ f^{-1}_{k}(\varphi)}{2 F_{m}^{(k)}} \right) \\
			& \leq  h(f_{k}(\varphi)) + h(2) + h(F_{m}^{(k)}) \\
			& <  \log(k+1) + \log4 + 2 \log 2 + (m-1) \log \varphi \\
			& < 1.87 \times 10^{14} k^{4} \log^{2} k \log n.  
		\end{align*}
		Thus, we deduce that
		\[
		\max\{2kh(\eta_{3}),|\log \eta_{3}|,0.16\} = 3.74 \times 10^{14} k^{5} \log^{2} k \log n := B_{3}.
		\]
		The inequality \eqref{eq 4.37} and the fact  $l < 0.8 n -0.4 < n$, which hold for all $n \geq 4$, allow us to select $D: = n$. Therefore, applying Theorem \ref{thm3} and comparing the resulting inequality with \eqref{eq 4.46}, we obtain
		\begin{equation}\label{eq 4.48}
			\frac{n}{\log^{2} n} < 8.18 \times 10^{27} k^{8} \log^{3} k,
		\end{equation}
		where we have used that $1 + \log 2k < 3.5 \log k$ for all $k \geq 2$ and $1+ \log n < 1.8 \log n$  for all $n \geq 4$. Thus, putting $S :=  8.18 \times 10^{27} k^{8} \log^{3} k$ in \eqref{eq 4.48} and using Lemma \ref{lem 2.6} together with the fact $64.27 + 8 \log k + 3 \log (\log k) < 100 \log k$ which holds for all $k \geq 2$, implies 
		\begin{align*}
			n & < 2^2 \left( 8.18 \times 10^{27} k^{8} \log^{3} k \right) (\log \left(8.18 \times 10^{27} k^{8} \log^{3} k \right))^2 \\
			& < 2^2 \left( 8.18 \times 10^{27} k^{8} \log^{3} k \right) (64.27 + 8 \log k + 3 \log (\log k))^2 \\
			&< 3.28 \times 10^{32} k^{8} \log^{5} k.
		\end{align*}
		This establishes \eqref{eq 4.38} and completes the proof of Lemma \ref{lem 4.1}.
	\end{proof}

	\subsection{The case when $2 \leq k \leq 500$}
	In this subsection, we will show the following result. 
	\begin{lemma}\label{lem 4.2}
		If $(l, k, n, m)$ is an integer solution of \eqref{eq 1.5} with $2 \leq k \leq 500$ and $n \geq k+2$, then our variables are bounded as follows:
		\[
		n  \leq 568 \quad \text{and} \quad l \leq 454.
		\]
	\end{lemma}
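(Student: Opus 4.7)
The plan is to mirror, with the obvious adjustments for the Lucas-balancing setting, the two-stage Baker--Davenport reduction that established Lemma \ref{lem 3.2}. First I would apply Lemma \ref{lem 2.2} to the linear form associated with $\Lambda_{4}$ from \eqref{eq 4.42} to bound $m$; then, for each of the finitely many admissible values of $m$, I would apply Lemma \ref{lem 2.2} again to the linear form associated with $\Lambda_{5}$ from \eqref{eq 4.47} to bound $n$; finally, the bound on $l$ follows from \eqref{eq 4.37}. The absolute upper bound on $l$ needed to feed Lemma \ref{lem 2.2} is supplied by Lemma \ref{lem 4.1}, namely $M_{k} := \lfloor 3.28 \times 10^{32} k^{8} \log^{5} k \rfloor$.

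For the first reduction, I would set
\[
\Gamma_{4} := \log(\Lambda_{4} + 1) = l \log \gamma - (n+m-2) \log \varphi + \log\!\left( \frac{f_{k}^{-2}(\varphi)}{2} \right).
\]
Inequality \eqref{eq 4.41} forces $|\Lambda_{4}|$ below a fixed constant less than $1$ once $m$ is not too small, so Lemma \ref{lem 2.5} converts \eqref{eq 4.41} into $|\Gamma_{4}| < c_{1}/\varphi^{m-1}$ for an explicit $c_{1}$. Dividing by $\log \varphi$ puts this in the form required by Lemma \ref{lem 2.2}, with $\tau := \log \gamma / \log \varphi$, $B := \varphi$, and $\mu$ depending only on $k$; the irrationality of $\tau$ is established as in the proof of Lemma \ref{lem 3.2}, by a Galois automorphism sending $\varphi$ to a conjugate inside the unit circle. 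A \textit{Mathematica} sweep over $k \in [2, 500]$ would locate, for each $k$, a convergent of $\tau$ with denominator $q(k) > 6 M_{k}$ and $\epsilon(k) := \|\mu q(k)\| - M_{k} \|\tau q(k)\| > 0$; the maximum of $\log(A q(k) / \epsilon(k))/\log B$ across these $k$ is then the desired upper bound on $m - 1$.

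Next, with $m$ confined to this bounded range, I would repeat the reduction on
\[
\Gamma_{5} := \log(\Lambda_{5} + 1) = l \log \gamma - (n-1) \log \varphi + \log\!\left( \frac{f_{k}^{-1}(\varphi)}{2 F_{m}^{(k)}} \right).
\]
By \eqref{eq 4.46}, $|\Lambda_{5}| < 1$ as soon as $n$ is modestly large, so Lemma \ref{lem 2.5} yields $|\Gamma_{5}| < c_{3}/\varphi^{n-1}$; dividing by $\log \varphi$ and applying Lemma \ref{lem 2.2}, now with $\mu$ depending on the pair $(k, m)$, across all admissible pairs should produce a uniform upper bound on $n - 1$, which the numerics are expected to yield as at most $567$, giving $n \leq 568$. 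The bound $l \leq 454$ is then immediate from \eqref{eq 4.37}.

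The main obstacle is purely computational and lies in the second reduction: the search has to cover every pair $(k, m)$ in the sizeable rectangle $[2, 500] \times [1, \text{bound on } m]$, and for a few exceptional pairs the quantity $\epsilon(k, m)$ may fail to be positive because $\mu$ is (numerically) a linear combination of $1$ and $\tau$. In such cases one must either replace the convergent by the next one or invoke Lemma \ref{lem 2.3} as a fallback. Since the structural setup is identical to that of Lemma \ref{lem 3.2} and only the constants differ ($2$ replacing $4\sqrt{2}$ and $M_{k}$ coming from Lemma \ref{lem 4.1} rather than Lemma \ref{lem 3.1}), no new theoretical ingredient is required, so the verification should complete in \textit{Mathematica} without difficulty.
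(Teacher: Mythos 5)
Your proposal follows essentially the same route as the paper: a two-stage Dujella--Peth\H{o} reduction applied first to $\Gamma_{4}$ (bounding $m-1$ by $553$, so $m \leq 554$) and then, for each admissible pair $(k,m)$, to $\Gamma_{5}$ (bounding $n-1$ by $567$), with $M_{k}$ supplied by Lemma \ref{lem 4.1} and $l \leq 454$ following from \eqref{eq 4.37}. The structure, choice of linear forms, and computational strategy all match the paper's proof, so the proposal is correct.
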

	\begin{proof}
		Let us define
		\begin{equation}\label{eq 4.49}
			\Gamma_{4} := \log (\Lambda_{4} + 1) = l \log \gamma -(n+m-2) \log \varphi + \log \left( \frac{ f_{k}^{-2}(\varphi)}{2} \right).   
		\end{equation}
		Assuming $m \geq 5$, from \eqref{eq 4.41}, we have $|\Lambda_{4}| < 0.98$. Choosing $a := 0.98$, we obtain the inequality
		\[
		|\Gamma_{4}| < \frac{- \log(1 - 0.98)}{0.98} \cdot \frac{5}{\varphi^{m-1}} < \frac{19.96}{\varphi^{m-1}}
		\]
		by Lemma \ref{lem 2.5}. Thus, it follows that
		\begin{equation*}
			\left|l \log \gamma -(n+m-2) \log \varphi + \log \left( \frac{f_{k}^{-2}(\varphi)}{2} \right) \right| <  \frac{19.96}{\varphi^{m-1}}. 
		\end{equation*}
		Dividing the above inequality by $\log \varphi$, we get 
		\begin{equation}\label{eq 4.50}
			\left| l  \left( \frac{\log \gamma}{\log \varphi}\right)  - (n+m) + 2 +  \frac{\log \left( \frac{f_{k}^{-2}(\varphi)}{2}\right)}{\log \varphi} \right| < \frac{49.23}{\varphi^{m-1}}.   
		\end{equation}
		We apply Lemma \ref{lem 2.2} with the parameters
		\[ \tau = \left( \frac{\log \gamma}{\log \varphi}\right), \quad \mu = 2 +  \frac{\log \left( \frac{f_{k}^{-2}(\varphi)}{2}\right)}{\log \varphi}, \quad A:= 49.23 \quad \text{and} \quad B:= \varphi.
		\]
		As seen before $\tau \notin \mathbb{Q}$. We take $ M_{k}:= \lfloor 3.28 \times 10^{32} k^{8} \log^{5} k    \rfloor$, which is an upper bound of $l$ from Lemma \ref{lem 4.1}. For each $k \in [2, 500]$, we find a good approximation of $\tau$ and a convergent $p_{l}/q_{l}$ of the continued fraction of $\tau$ such that $ q_{l} = q(k) > 6M_{k}$ is a denominator of a convergent of the continued fraction of $\tau$ with $ \epsilon = \epsilon(k):= \| \mu q\| - M_{k}\|\tau q\| > 0$. A computer search with \textit{Mathematica} revealed that if $k \in [2,500]$, then the maximum value of $\log(Aq/\epsilon)/ \log B $ is $553.311$, which is according to Lemma \ref{lem 2.2}, is an upper bound on $m-1$.  
		
		Now, we fix $1 \leq m \leq 554$ and we consider
		\begin{equation}\label{eq 4.51}
			\Gamma_{5} := \log (\Lambda_{5} + 1) = l \log \gamma -(n-1) \log \varphi + \log \left( \frac{ f_{k}^{-1}(\varphi)}{2 F_{m}^{(k)}}  \right).   
		\end{equation}
		Suppose that $n \geq 3$. Then from \eqref{eq 4.45}, we have $|\Lambda_{5}| < 0.89$. Choosing $a := 0.89$ and using Lemma \ref{lem 2.5}, we obtain
		\begin{equation}\label{eq 4.52}
			|\Gamma_{5}| < \frac{- \log(1 - 0.89)}{0.89} \cdot \frac{2}{\varphi^{n-1}} < \frac{4.97}{\varphi^{n-1}}.    
		\end{equation}
		Replacing \eqref{eq 4.52} by \eqref{eq 4.51} and dividing across by $\log \varphi$, we obtain
		\begin{equation}\label{eq 4.53}
			\left| l  \left( \frac{\log \gamma}{\log \varphi}\right)  - (n-1) +   \frac{\log \left( \frac{f_{k}^{-1}(\varphi)}{2 F_{m}^{(k)}}\right)}{\log \varphi} \right| <  \frac{12.26}{\varphi^{n-1}}. 
		\end{equation}
		To apply Lemma \ref{lem 2.2} to \eqref{eq 4.53}, this time for $1 \leq m \leq 554$, we set
		\[ \tau =  \frac{\log \gamma}{\log \varphi}, \quad \mu = 1 + \frac{\log \left( \frac{f_{k}^{-1}(\varphi)}{2 F_{m}^{(k)}}\right)}{\log \varphi}, \quad A:= 12.26 \quad \text{and} \quad B:= \varphi.
		\]
		Again, for $(k, m) \in [ 2, 500] \times [ 1, 554]$, we find a good
		approximation of $\tau$ and a convergent $p_{l}/q_{l}$ of the continued fraction of $\tau$ such that $ \epsilon = \epsilon(k):= \| \mu q\| - M_{k}\|\tau q\| > 0$, where $M_{k}:= \lfloor 3.28 \times 10^{32} k^{8} \log^{5} k \rfloor$ which is an upper bound of a from Lemma \ref{lem 4.1}. After doing this, we use Lemma \ref{lem 2.2}
		on inequality \eqref{eq 4.53}. Again, a program in Mathematica revealed that the maximum value of $\log(Aq/\epsilon)/ \log B$ over all $(k, m) \in [ 2, 500 ] \times [1, 554]$ is $567.728$, which according to Lemma \ref{lem 2.2}, is an upper bound of $n-1$.
		
		Hence, we deduce that the possible solutions $(l, k, n, m)$ of \eqref{eq 1.5} for which
		$k \in [2, 500]$ satisfy $k+2 \leq m \leq n \leq 568$. Therefore, we use inequality \eqref{eq 4.37} to obtain $l \leq 454$.
	\end{proof}
	
	\subsection{The case when $ k > 500$}
	This subsection aims to establish that there are no solutions for the case where $k > 500$ and $n \geq k+2$.
	
	\begin{lemma}\label{lem 4.3}
		If $(l, k, m, n)$ is a solution of the Diophantine equation \eqref{eq 1.5} with $k > 500$ and $n \geq k+2$, then $k$ and $n$ are bounded as
		\[
		k < 5.82 \times 10^{14} \quad \text{and} \quad n < 1.97 \times 10^{158}.
		\]
	\end{lemma}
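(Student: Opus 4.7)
The plan is to mirror the strategy of Lemma \ref{lem 3.3}, adapted to the Lucas-balancing setting where the Binet expression is $C_l = (\gamma^l + \delta^l)/2$ rather than $(\gamma^l - \delta^l)/(4\sqrt{2})$. First I would combine the hypothesis $k > 500$ with Lemma \ref{lem 4.1} to verify that $n < 3.28 \times 10^{32} k^8 \log^5 k < 2^{k/2}$, which licenses the sharp approximation \eqref{eq 2.12} for $F_n^{(k)}$ and, assuming $m \geq k+2$ (the residual subcase $m \leq k+1$ reduces to $F_m^{(k)} = 2^{m-2}$ and is folded into the same linear form with $m$ replaced by a small constant). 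Writing $F_n^{(k)} F_m^{(k)} = 2^{m+n-4}(1+\xi)^2$ with $|\xi| < 2^{-k/2}$ and using Binet, I would derive
\[
\frac{\gamma^l}{2} - 2^{m+n-4} \;=\; 2^{m+n-4}\bigl(2\xi + \xi^2\bigr) - \frac{\delta^l}{2},
\]
and, after dividing by $2^{m+n-4}$, obtain an estimate of the form $|\Lambda_6| < c\, 2^{-k/2}$, where
\[
\Lambda_6 \;:=\; \gamma^l \cdot 2^{-(m+n-3)} - 1.
\]

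Next I would verify $\Lambda_6 \neq 0$: if $\Lambda_6 = 0$ then $\gamma^l = 2^{m+n-3} \in \mathbb{Q}$, but $\gamma^l = a + b\sqrt{2}$ with $b \neq 0$ for every $l \geq 1$, a contradiction. Then Matveev's Theorem \ref{thm3} is applied to this two-term linear form with parameters
\[
(\eta_1, a_1) := (\gamma, l), \qquad (\eta_2, a_2) := \bigl(2, -(m+n-3)\bigr),
\]
inside the field $\mathbb{L} := \mathbb{Q}(\sqrt{2})$ of degree $d_{\mathbb{L}} = 2$, with $B_1 = \log \gamma$, $B_2 = 2 \log 2$, and $D = 2n$ (justified by \eqref{eq 4.37} and $m + n - 3 < 2n$). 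The resulting lower bound, compared with the upper bound $|\Lambda_6| < c\, 2^{-k/2}$, yields an inequality of the shape
\[
k < C_1 \,(1 + \log 2n) \;<\; C_1'\, \log n
\]
for an explicit constant $C_1'$ on the order of $10^{13}$, where the gain over Lemma \ref{lem 3.3} comes from $s = 2$ (so the factor $30^{s+3} s^{4.5}$ becomes $30^5 \cdot 2^{4.5}$ rather than $30^6 \cdot 3^{4.5}$) and the absence of the third height $B_3$.

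Finally, I would invoke Lemma \ref{lem 4.1} to bound $\log n < 64.27 + 8\log k + 5 \log \log k < C_2 \log k$, so that the previous display gives $k < C_3 \log k$; applying Lemma \ref{lem 2.6} with $m = 1$ then delivers the explicit bound $k < 5.82 \times 10^{14}$, and substituting back into Lemma \ref{lem 4.1} yields $n < 1.97 \times 10^{158}$. The main obstacle is purely numerical: ensuring that the Matveev constant, the slack from the switch $(1 + \log 2n) \to \log n$, and the subsequent inversion via Lemma \ref{lem 2.6} align so that the final constants are exactly $5.82 \times 10^{14}$ and $1.97 \times 10^{158}$. A secondary nuisance is the case $m \leq k+1$, which must be treated briefly on its own (the approximation \eqref{eq 2.12} fails there) by replacing the factor $F_m^{(k)}$ with the explicit value $2^{m-2}$ and repeating the same Matveev argument with an adjusted exponent on $2$; the analysis is strictly easier and produces the same type of bound.
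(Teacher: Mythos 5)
Your proposal follows essentially the same route as the paper: the same reduction via \eqref{eq 2.12} to the two-term linear form $\Lambda_6 = \gamma^l 2^{-(m+n-3)} - 1$ with $|\Lambda_6| < 4/2^{k/2}$, the same nonvanishing argument, the same application of Matveev's theorem over $\mathbb{Q}(\sqrt{2})$ with $D = 2n$, and the same bootstrapping through Lemma \ref{lem 4.1} and Lemma \ref{lem 2.6} to reach the stated constants. If anything you are slightly more careful than the paper, which silently assumes $m \geq k+2$ where you explicitly flag and dispose of the subcase $m \leq k+1$; the only blemishes are cosmetic numerical estimates (your intermediate constant of order $10^{13}$ versus the paper's $8.52 \times 10^{10}$, and quoting $64.27$ instead of $74.87$ in the bound for $\log n$), which do not affect the structure of the argument.
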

	\begin{proof}
		For $k > 500$, it is easy to see that
		\[
		n < 3.28 \times 10^{32} k^{8} \log^{5} k < 2^{k/2}.
		\]
		Inserting \eqref{eq 1.1} and \eqref{eq 2.12} in \eqref{eq 1.5}, we obtain
		\begin{align*}
			\frac{\gamma^{l}}{2} -  2^{m+n-4} = 2^{m+n-4} \left( 2 \xi + \xi^{2} \right) -\frac{\delta^{l}}{2}.
		\end{align*}
		As $n \geq m \geq k+2$, this and the fact $\left( 2 \xi + \xi^{2} \right) < 3/2^{k/2}$ yield
		\begin{equation}\label{eq 4.54}
			|\Lambda_{6}| = \left| \gamma^{l} 2^{-(m+n-3)} - 1  \right| < \frac{3}{2^{k/2}} + \frac{1}{2^{2k}} < \frac{4}{2^{k/2}}.    
		\end{equation}
		But $\Lambda_{6}$ is not zero. Indeed, if $\Lambda_{6}$ were zero, we would then get that $\gamma^{l} = 2^{m+n-3} \in \mathbb{Q}$, which is impossible. Therefore, we can apply Theorem \ref{thm3} with
		\[
		(\eta_{1}, a_{1}) := (\gamma, l), \quad \text{and} \quad (\eta_{2}, a_{2}) := (2, -(m+n-3)).
		\] 
		Since $\eta_{1}, \eta_{2}$ are elements of the field $\mathbb{L} := \mathbb{Q}(\sqrt{2})$, then $d_{\mathbb{L}} = [\mathbb{L}:\mathbb{Q}]=2$. The fact that $m \leq n$ and the inequality \eqref{eq 4.37} implies that we can choose $D:= 2 n$ because $m+n-3 \leq 2n$. On the other hand, since $h(\eta_{1}) = \frac{\log \gamma}{2}$ and $h(\eta_{2}) = \log 2$. Thus, we can take
		$B_{1} := \log \gamma$ and $B_{2} := 2 \log 2.$ Thus, considering inequality \eqref{eq 4.54} and using Theorem \ref{thm3}, we get
		\begin{equation}\label{eq 4.55}
			k < 8.52 \times 10^{10} \log n,    
		\end{equation}
		where we have used the fact that $1 + \log 2n < 2.3 \log n$ for $n \geq 4$. In contrast, Lemma \ref{lem 4.1} leads to
		\[
		\log n < \log (3.28 \times 10^{32} k^{8} \log^{5} k ) < 74.87 + 8 \log k + 5 \log (\log k) < 114 \log k
		\]
		for $k \geq 2$. Thus, from \eqref{eq 4.54}, we get
		\[
		k < 9.72 \times 10^{12} \log k.
		\]
		Further using Lemma \ref{lem 2.6} and Lemma \ref{lem 4.1} the above inequality leads to 
		\[
		k < 5.82 \times 10^{14} \quad \text{and} \quad n < 1.97 \times 10^{158}.
		\]
		This finishes the proof Lemma \ref{lem 4.3}.
	\end{proof}
	
	The above lemma provides very large bounds for $n$ and $k$. Our next goal is to minimize this upper bound to a suitable range, for which we shall prove the following lemma.
	
	\begin{lemma}\label{lem 4.4}
		The equation \eqref{eq 1.5} has no solutions for $k \geq 500$ and $n \geq k+2$.
	\end{lemma}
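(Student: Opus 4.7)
The plan is to mirror the reduction step of Lemma \ref{lem 3.4}: starting from the enormous bounds of Lemma \ref{lem 4.3}, I would linearise \eqref{eq 4.54}, apply a Diophantine approximation lemma to extract a small upper bound on $k$, and then iterate until the resulting bound contradicts $k > 500$.

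First I would set
\[
\Gamma_6 := \log(\Lambda_6 + 1) = l\log\gamma - (m+n-3)\log 2.
\]
For $k > 500$, inequality \eqref{eq 4.54} gives $|\Lambda_6| < 4 \cdot 2^{-k/2} < 0.01$, so Lemma \ref{lem 2.5} (with $a := 0.01$) yields $|\Gamma_6| < 4.1 \cdot 2^{-k/2}$. Dividing by $\log 2$ and writing $\tau := (\log\gamma)/\log 2$ then produces
\[
\bigl| l\tau - (m+n-3) \bigr| < \frac{5.92}{2^{k/2}}.
\]
In contrast to Lemma \ref{lem 3.4}, there is no additive constant $\mu$ here, because $\Lambda_6$ lacks the $\sqrt{2}$ factor that was present in $\Lambda_3$. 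Consequently Lemma \ref{lem 2.2} cannot be invoked---its $\epsilon$ would be $-M\|\tau q\| < 0$---so I would instead appeal to Lemma \ref{lem 2.3}. The irrationality of $\tau$ follows because $\gamma^b = 2^a$ with integers $a, b > 0$ is impossible: applying the Galois automorphism $\gamma \mapsto \delta$ of $\mathbb{Q}(\sqrt{2})/\mathbb{Q}$ gives $|\delta|^b = 2^a > 1$, contradicting $|\delta| < 1$.

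Taking $M := 2 \cdot 1.97 \times 10^{158}$, which bounds $l$ via \eqref{eq 4.37} and Lemma \ref{lem 4.3}, I would compute with \textit{Mathematica} the first convergent $p_N/q_N$ of $\tau$ satisfying $q_N > M$, together with $a(M) := \max\{a_i : 0 \le i \le N\}$. Lemma \ref{lem 2.3} then supplies
\[
\bigl| l\tau - (m+n-3) \bigr| > \frac{1}{(a(M)+2)\, l},
\]
and combining this with the upper bound yields $2^{k/2} < 5.92\,(a(M)+2)\,M$, hence an explicit bound on $k$ of order at most a few thousand.

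Finally, I would iterate as in Lemma \ref{lem 3.4}: inserting the improved bound on $k$ into Lemma \ref{lem 4.1} yields a far smaller upper bound on $n$ (plausibly of order $10^{60}$), after which a second pass through the same reduction should push $k$ below $500$, contradicting the standing hypothesis. The principal obstacle is computational rather than conceptual: one must verify numerically that the partial quotients $a_i$ in the continued-fraction expansion of $\tau$ remain of modest size up to the required index $N$. If $a(M)$ were anomalously large, the resulting bound on $k$ would be weaker and more iterations would be required; in practice one expects the same benign behaviour observed for the analogous $\tau$ in Lemma \ref{lem 3.4}, so two reduction passes should suffice.
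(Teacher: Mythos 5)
Your proposal is correct and follows essentially the same route as the paper: both recognize that the absence of an additive constant forces the use of Lemma \ref{lem 2.3} rather than Lemma \ref{lem 2.2}, both take $M \approx 3.94\times 10^{158}$ from Lemma \ref{lem 4.3}, and both iterate the reduction a second time to push $k$ below $500$. The only cosmetic difference is that you divide by $\log 2$ (so $x := l$ in Lemma \ref{lem 2.3}) while the paper divides by $\log\gamma$ (so $x := m+n-3$); either choice is valid and yields the same numerical conclusion.
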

	\begin{proof}
		Now, we lower the bound on $n$ and $k$. We define
		\[
		\Gamma_{6} := l \log \gamma -(m+n-3) \log 2.
		\] 
		Since $k > 500$, then from \eqref{eq 4.54} we have $|\Lambda_{6}| < 0.01$. Therefore by Lemma \ref{lem 2.5}, we have
		\begin{equation}\label{eq 4.56}
			|\Gamma_{6}| = |\log (\Lambda_{6} + 1)| < \frac{- \log(1 - 0.01)}{0.01} \cdot \frac{4}{2^{k/2}} < \frac{4.1}{2^{k/2}}.   
		\end{equation}
		Thus, it follows that
		\begin{equation*}
			\left| (m+n-3) \log 2 - l \log \gamma \right| <  \frac{4.1}{2^{k/2}}.
		\end{equation*}
		Dividing the above inequality by $\log \gamma$, we get 
		\begin{equation}\label{eq 4.57}
			\left| (m+n-3) \left( \frac{\log 2}{\log \gamma} \right) - l \right| < \frac{2.33}{2^{k/2}}. 
		\end{equation}
		To obtain a lower bound for the left-hand side of \eqref{eq 4.57}, we will apply Lemma \ref{lem 2.3} with
		\[
		\tau :=  \frac{\log 2}{\log \gamma} \notin \mathbb{Q}, \quad x:= m+n-3, \quad \text{and} \quad y:= l.
		\]
		The fact $m+n-3 < 2n$ and $n := 1.97 \times 10^{158}$ by Lemma \ref{lem 4.3} imply that we can take $M := 3.94 \times 10^{158}$. Let 
		\[
		[a_{0}, a_{1}, a_{2},\dots] := [0, 2, 1, 1, 5, 3, 2, 1, 22, 1, 5, 38, 1, 1, 1, 8, 1, 3, 7, 1, 5, 2, \
		5, 2, 2, 200, 1, 4, 4, 6  \dots]
		\]
		be the continued fraction expansion of $\tau$. A quick search using \textit{Mathematica} reveals that 
		\[
		q_{301} < 3.94 \times 10^{158} < q_{302}.
		\]
		Furthermore, $a_{M}:= \max\{a_{i} : i=0, 1, \dots,302\} = 4008$. So by Lemma \ref{lem 2.3}, we have
		\begin{equation}\label{eq 3.38}
			\left|(m+n-3)\left(\frac{\log 2}{\log \gamma} \right) - l \right| > \frac{1}{4010(m+n-3)}.
		\end{equation}
		Comparing estimates \eqref{eq 3.38}, \eqref{eq 4.57}, and by Lemma \ref{lem 4.3}, we get
		\[
		\frac{k}{2} < \frac{\log \left(2.33 \times 4008 \times 3.94 \times 10^{158} \right)}{\log 2} < 541,
		\]
		which implies that $k \leq 1082$. With this new upper bound of $k$ and by Lemma \ref{lem 4.1}, we have
		\[
		n < 1.1 \times 10^{61}.
		\]
		We now proceed as we did before but with $M:= 2.2 \times 10^{61}$ and we obtain  $k < 430$, which contradicts our assumption $k > 500$. This completes the proof of Lemma \ref{lem 4.4}.    
	\end{proof}
	\subsection{The final step}
	According to Lemma \ref{lem 4.2} and \ref{lem 4.4}, if $(l, k, n, m)$ is a solution of the Diophantine equation \eqref{eq 1.5} then
	\[
	2 \leq k \leq 500, \quad k+2 \leq n \leq 568, \quad 1 \leq m \leq n \quad \text{and} \quad 1 \leq l \leq 454.
	\]
Using \textit{Mathematica}, we checked that all the solutions of the Diophantine equation \eqref{eq 1.5} are those listed in the statement of Theorem \ref{thm2}. This completes the proof of Theorem \ref{thm2}.

\section*{Acknowledgment}
The author, Bijan Kumar Patel, acknowledges the Mukhyamantri Research Innovation (MRI) under MRIP, OSHEC, Government of Odisha, for extramural research funding (Grant No. 24EM/MT/90, 2024) to support this research work.

\vspace{05mm} \noindent \footnotesize
\begin{minipage}[b]{90cm}
\large{Department of Mathematics,\\
		School of Applied Sciences, \\ 
		KIIT University, Bhubaneswar, \\ 
		Bhubaneswar 751024, Odisha, India. \\
		Email: bptbibhu@gmail.com}
\end{minipage}

\vspace{05mm} \noindent \footnotesize
\begin{minipage}[b]{90cm}
\large{P. G. Department of Mathematics,\\
		Government Women's College, Sundargarh,\\ 
		Sambalpur University, Odisha, India. \\
		Email: iiit.bijan@gmail.com}
\end{minipage}


\begin{thebibliography}{99}
\bibitem{Adedji} K. N. Ad\'edji, J. Odjoumani and A. Togb\'e, Padovan and Perrin numbers as products of two generalized Lucas numbers, \textit{Arch. Math.} \textbf{59}(4) (2023), 315--337.
    
\bibitem{Baker}  A. Baker and H. Davenport, The equations $3x^{2} - 2 = y^{2}$ and $8x^{2} - 7 = z^{2}$, \textit{Q. J. Math. Oxf. Ser.} \textbf{20}(2) (1969), 129--137.
		
\bibitem{Behera} A. Behera, G. K. Panda, On the square roots of triangular numbers, \textit{Fibonacci Quart.} \textbf{37}(2) (1999), 98--105.
		
\bibitem{Bravo} J. J. Bravo and F. Luca,  Powers of two in generalized Fibonacci sequences, \textit{Rev. Colomb. Mat.} \textbf{46}(1) (2012), 67--79. 
		
\bibitem{Bravo1}J. J. Bravo, C. A. G$\acute{\text{o}}$mez and J. L. Herrera, On the intersection of $k$-Fibonacci and Pell numbers, \textit{Bull. Korean Math. Soc.} \textbf{56}(2) (2019), 535--547.
		
\bibitem{Bugeaud} Y. Bugeaud, M. Mignotte and S. Siksek, Classical and modular approaches to exponential Diophantine equations. I. Fibonacci and Lucas perfect powers, \textit{Ann. of Math.} \textbf{163}(3) (2006), 969--1018.

\bibitem{Ddamulira} M. Ddamulira, F. Luca and M. Rakotomalala, Fibonacci numbers which are products of two Pell numbers, \textit{Fibonacci Quart.} \textbf{54}(1) (2016), 11--18.
		
\bibitem{Dresden} G. P. Dresden and Z. Du, A simplified Binet formula for $k$-generalized Fibonacci numbers, \textit{J. Integer Seq.} \textbf{17} (2014), Article 14.4.7.
		
\bibitem{Dujella} A. Dujella and  A. Peth\"{o}, A generalization of a theorem of Baker and Davenport, \textit{Q. J. Math. Oxf. Ser. (2)} \textbf{49}(195) (1998), 291--306.
		
\bibitem{Erduvan} F. Erduvan and R. Keskin, Fibonacci numbers which are products of two balancing numbers, \textit{Ann.
Math. Inform.} \textbf{50} (2019), 57--70.

\bibitem{Liptai} K. Liptai, Fibonacci balancing numbers, \textit{Fibonacci Quart.} \textbf{42}(4) (2004), 330--340.
		
\bibitem{Matveev} E. M. Matveev, An explicit lower bound for a homogeneous rational linear form in the logarithms of algebraic numbers, II, \textit{Izv. Ross. Akad. Nauk Ser. Mat.} \textbf{64}(6) (2000), 125--180.
		
\bibitem{Murty} R. M. Murty and J. Esmonde, Problems in algebraic number theory, second edition, \textit{Graduate Texts in Mathematics}, \textbf{190}, Springer-Verlag, New York (2005).

 
\bibitem{Panda} G. K. Panda, Some fascinating properties of balancing numbers, \textit{Proc. Eleventh Internat. Conference on Fibonacci Numbers and Their Applications, Cong. Numerantium} \textbf{194}(8) (2009), 185--189.
		
\bibitem{Ray} P. K. Ray, Balancing and cobalancing numbers, \textit{Ph.D. Thesis, National Institute of Technology, Rourkela} (2009).
		
\bibitem{Rihane3} S. E. Rihane, On $k$-Fibonacci balancing and $k$-Fibonacci Lucas balancing numbers, \textit{Carpathian Math. Publ.} \textbf{13}(1) (2021), 259--271.
		
\bibitem{Rihane4} S. E. Rihane, On $k$-Fibonacci numbers expressible as product of two Balancing or Lucas-Balancing numbers, \textit{Indian J. Pure Appl. Math.} \textbf{56}(1) (2025), 339--356.
		
\bibitem{Sanchez} S. G. Sanchez and F. Luca, Linear combinations of factorials and $S$-units in a binary recurrence sequence, \textit{Ann. Math. Qu\'ebec} \textbf{38}(2) (2014), 169--188.
		
\bibitem{Weger} B. M. M. de Weger, Algorithms for Diophantine equations, \textit{Stichting Mathematisch Centrum}, (1989).
		
\bibitem{Wolfram} D. A. Wolfram, Solving generalized Fibonacci recurrences, \textit{Fibonacci Quart.} \textbf{36}(2) (1998), 129--145.
\end{thebibliography}
\end{document}